\newtheorem{theorem}{Theorem}
\newtheorem{lemma}{Lemma}[section]
\theoremstyle{remark}
\numberwithin{equation}{section}
\newcommand{\gam}{\gamma}
\newcommand{\eps}{\varepsilon}
\newcommand{\tet}{\theta}
\begin{document}

\title[The Waring--Goldbach problem for large powers]{On the Waring--Goldbach 
problem\\ for eighth and higher powers}
\author[A.V. Kumchev]{Angel V. Kumchev}
\address{Department of Mathematics\\ Towson University\\ Towson, MD 21252\\ USA}
\email{akumchev@towson.edu}
\author[T.D. Wooley]{Trevor D. Wooley}
\address{School of Mathematics\\ University of Bristol\\ University Walk\\ 
Bristol~BS8~1TW\\ UK}
\email{matdw@bristol.ac.uk}
\subjclass[2000]{11P32, 11L20, 11P05, 11P55.}

\begin{abstract} Recent progress on Vinogradov's mean value theorem has resulted in 
improved estimates for exponential sums of Weyl type. We apply these new estimates to 
obtain sharper bounds for the function $H(k)$ in the Waring--Goldbach problem. We 
obtain new results for all exponents $k\ge 8$, and in particular establish that 
$H(k)\le (4k-2)\log k+k-7$ when $k$ is large, giving the first improvement on the 
classical result of Hua from the 1940s.
\end{abstract}
\maketitle

\section{Introduction} A formal application of the Hardy--Littlewood method suggests that 
whenever $s$ and $k$ are natural numbers with $s\ge k+1$, then all large integers $n$ 
satisfying appropriate local conditions should be represented as the sum of $s$ $k$th 
powers of prime numbers. With this expectation in mind, consider a natural number $k$ 
and prime number $p$, and define $\tet=\tet(k,p)$ to be the integer with $p^\tet|k$ 
but $p^{\tet+1}\nmid k$, and $\gam=\gam(k,p)$ by
$$\gam(k,p)=\begin{cases} \tet+2,&\text{when $p=2$ and $\tet>0$,}\\
\tet+1,&\text{otherwise.}
\end{cases}$$
We then put
$$K(k)=\prod_{(p-1)|k}p^\gam,$$
and denote by $H(k)$ the least integer $s$ such that every sufficiently large positive 
integer congruent to $s$ modulo $K(k)$ may be written in the shape
\begin{equation}\label{1.1}
p_1^k+p_2^k+\ldots +p_s^k=n,
\end{equation}
with $p_1,\dots,p_s$ prime numbers. We note that the local conditions introduced in the 
definition of $H(k)$ are designed to exclude the degenerate situations in which one or 
more variables might otherwise be forced to be prime divisors of $K(k)$. In such 
circumstances, the representation problem at hand reduces to a similar problem of 
Waring--Goldbach type in fewer variables. Thus, for example, since every representation 
of an even integer $n$ as the sum of three primes reduces to a representation of $n-2$ 
as the sum of two primes, we investigate the equation \eqref{1.1} with $k=1$ and 
$s=3$ only when $n$ is odd. We direct the reader to recent work \cite{But2010, Chu2009} 
for more on the Waring--Goldbach problem in the absence of such restrictions.\par

The first general bound for $H(k)$ was obtained by Hua \cite{Hua38}, who showed that
\begin{equation}\label{1.2}
H(k)\le 2^k+1\qquad (k \ge 1).
\end{equation}
This result, which generalizes I. M. Vinogradov's celebrated three primes theorem 
\cite{IVin37}, remains the best known bound on $H(k)$ for $k=1$, $2$ and $3$. When 
$k\ge 4$, on the other hand, the bound \eqref{1.2} has been sharpened considerably. 
These improvements may be grouped into three chronological phases: (i) work of 
Davenport and Hua from the 1940s and 1950s (see Hua \cite{Hua65}); (ii) refinements 
of the diminishing ranges method in Waring's problem developed in the mid-1980s by 
Thanigasalam \cite{Than85, Than87} and Vaughan \cite{Vaug86}; and (iii) more recent 
refinements of Zhao \cite{Zhao14}, the first author \cite{Kumc05}, and of Kawada and 
the second author \cite{KaWo01}. Thus, for intermediate values of $k$, the current state 
of play may be summarised with the bounds
\begin{gather*}
H(4) \le 13, \quad H(5) \le 21, \quad H(6) \le 32, \quad H(7) \le 46, \\ 
H(8) \le 63, \quad H(9) \le 83, \quad H(10) \le 103.
\end{gather*}
Here we note that although Thanigasalam \cite{Than87} claims only the bound 
$H(10)\le 107$, it is clear that his methods establish the stronger bound recorded above. 
For larger values of $k$, Hua \cite{Hua59, Hua65} adapted ideas from Vinogradov's 
work on Waring's problem to show that
\begin{equation}\label{1.3}
H(k)\le k(4\log k+2\log\log k+O(1)),\quad \text{as } k \to \infty.
\end{equation} 

In this paper, we make use of the new estimates for Weyl sums that result from recent 
work of the second author \cite{Wool12,Wool15} concerning Vinogradov's mean-value 
theorem to improve on the above results for $k \ge 8$. In particular, we obtain the 
following theorem, which represents the first improvement on Hua's bound \eqref{1.3} 
in more than half a century.

\begin{theorem}\label{th1}
When $k$ is large, one has $H(k)\le (4k-2)\log k+k-7$.
\end{theorem}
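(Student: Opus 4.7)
The plan is to apply the Hardy--Littlewood circle method to the prime-weighted exponential sum
$$f(\alpha)=\sum_{p\le P}(\log p)e(\alpha p^k),\qquad P=n^{1/k},$$
and to bound the representation count $R(n)=\int_0^1 f(\alpha)^se(-\alpha n)\,d\alpha$ from below. The aim is to show $R(n)>0$ for all large $n$ satisfying the congruence conditions built into the definition of $H(k)$, with $s=\lfloor(4k-2)\log k\rfloor+k-7$.

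I would first dissect $[0,1)$ into major arcs $\mathfrak{M}$ (neighborhoods of Farey rationals $a/q$ with $q\le Q$) and minor arcs $\mathfrak{m}=[0,1)\setminus\mathfrak{M}$, for a parameter $Q$ which is a small power of $P$. The major-arc analysis is by now standard for the Waring--Goldbach problem: a Siegel--Walfisz style approximation of $f$ near each $a/q$, summed over $q$, yields a main term $\mathfrak{S}(n)J(n)\gg n^{s/k-1}$, where the local conditions encoded in $K(k)$ guarantee $\mathfrak{S}(n)\gg 1$.

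The decisive step is to prove that $\int_{\mathfrak{m}}|f(\alpha)|^sd\alpha=o(n^{s/k-1})$. I would use the familiar split
$$\int_{\mathfrak{m}}|f(\alpha)|^sd\alpha\le\Bigl(\sup_{\alpha\in\mathfrak{m}}|f(\alpha)|\Bigr)^{s-2u}\int_0^1|f(\alpha)|^{2u}d\alpha,$$
and feed in two ingredients. First, a pointwise Weyl-type bound $\sup_{\alpha\in\mathfrak{m}}|f(\alpha)|\ll P^{1-\sigma+\eps}$, obtained by transferring the Weyl-sum estimates of \cite{Wool12,Wool15}---which rest on the (near-)resolution of Vinogradov's mean value theorem---from integer to prime arguments via a Vaughan-type identity. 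Second, a mean-value bound $\int_0^1|f(\alpha)|^{2u}d\alpha\ll P^{2u-k+\Delta+\eps}$, obtained by majorising the prime sum with the integer $k$th-power sum (at a cost of $(\log P)^{2u}$) and invoking the sharpest Vinogradov--Wooley moment estimate. Combining these produces a minor-arc contribution of order $P^{s-k-\sigma(s-2u)+\Delta+\eps}$, which is $o(P^{s-k})$ precisely when $s-2u>\Delta/\sigma$.

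The main obstacle, and the source of the explicit constants in the statement, is the fine parameter optimization. One must choose $u$ of order $2k\log k$ so that $\Delta$ is negligible while $2u$ remains close to the target, and at the same time drive $\sigma$ down to order $(2k\log k)^{-1}$ or better, ensuring that $\Delta/\sigma$ contributes only $O(k)$ additional variables. The saving over Hua's bound \eqref{1.3}---the removal of the $2k\log\log k$ term---hinges precisely on this new quality of Weyl exponent, which avoids the $\log\log k$ loss intrinsic to the classical Vinogradov--Hua treatment. The exact leading constant $4k-2$ and the lower-order $k-7$ should then emerge from careful bookkeeping of the residual contributions in both the Weyl bound and the mean-value estimate, together with the efficient handling of the number of variables needed to absorb the factor $(\log P)^{2u}$ and the singular integral evaluation.
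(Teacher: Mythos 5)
Your overall framework (circle method, major arcs via a Siegel--Walfisz approximation, minor arcs via a sup-times-mean-value split) matches the paper's in outline, but the decisive ingredient you invoke does not exist, and this is a genuine gap. You require a mean value bound $\int_0^1|f(\alpha)|^{2u}\,d\alpha\ll P^{2u-k+\Delta+\eps}$ with $\Delta$ negligible for $u$ of order $2k\log k$ and all variables running over the same range $[1,P]$. No such estimate follows from Vinogradov's mean value theorem or from efficient congruencing: for equal-range variables the near-optimal moment $P^{2u-k+\eps}$ only becomes available once the number of variables is of order $k^2$ (and Hua's inequality needs $2^k$). For $u\sim 2k\log k$ the deficiency $\Delta$ would be close to $k$, and since the true Weyl saving is $\sigma_k\asymp k^{-2}$ --- not $(2k\log k)^{-1}$, which is far beyond anything known --- your condition $s-2u>\Delta/\sigma$ would force $s\gg k^3$. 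The missing idea is the diminishing ranges construction: the paper confines the prime variables to dyadic ranges $N^{\lambda_j}<p\le 2N^{\lambda_j}$ with exponents $\lambda_j$ decreasing essentially geometrically with ratio $\theta+\sigma_{k-1}/k$, where $\theta=1-1/k$ (see \eqref{lam.1}--\eqref{lam.4}), and controls the mean value of $\prod_j|g_k(\alpha;N^{\lambda_j})|^2$ by iterating a difference-polynomial estimate (Lemmata \ref{lem2.1.3}, \ref{lem2.2.2} and \ref{lem2.3.1}), obtaining $N^{2\Lambda-k+\eta+\eps}$ with $\Lambda=\sum_j\lambda_j$ and $k-\Lambda\asymp 1/k$ once $t+u\approx 2k\log k$.

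Consequently the new work on Vinogradov's mean value theorem enters not through a direct moment estimate for $f$, but by improving the Weyl exponent from $2^{1-k}$ to $\sigma_k\asymp(2k^2)^{-1}$, which (i) sharpens the common ratio of the diminishing ranges so that $k-\Lambda=O(1/k)$ is attainable with only about $2k\log k$ ranges, and (ii) strengthens the minor-arc bound for the auxiliary sum $F_k(\alpha;X,H)$. The residual deficiency $k-\Lambda\asymp 1/k$ is then absorbed by $v\approx\tfrac12k-\log k$ additional full-range prime variables, each worth $2\sigma_k$ in the mean value, plus $h\le 3$ variables exploiting the minor-arc sup (Lemma \ref{lemw.1}); this bookkeeping is what yields $2(t+u+v)+h=(4k-2)\log k+k-7$. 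Without the diminishing ranges device, or an equally powerful substitute, your argument cannot reach any bound of the shape $ck\log k$.
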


Our computations suggest strongly that the bound recorded in this theorem holds as soon 
as $k$ exceeds $64$. We also obtain the following bounds for $H(k)$ when $8 \le k \le 20$.

\begin{theorem}\label{th2}
Let $8\le k\le 20$. Then $H(k)\le s(k)$, where $s(k)$ is defined by Table~\ref{tab1}.
\begin{table}[h]
\begin{center}
\begin{tabular}{cccccccccccccc}
\toprule
$k$ & $8$  & $9$  & $10$ & $11$  & $12$  & $13$  & $14$  & $15$  & $16$  & $17$  & 
$18$  & $19$  & $20$ \\
$s(k)$ & $61$ & $75$ & $89$ & $103$ & $117$ & $131$ & $147$ & $163$ & $178$ & 
$194$ & $211$ & $227$ & $244$\\
\bottomrule
\end{tabular}\\[6pt]
\end{center}
\caption{Upper bounds for $H(k)$ when $8\le k\le 20$}\label{tab1}
\end{table}
\end{theorem}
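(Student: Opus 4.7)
The plan is to establish Theorem~\ref{th2} via the Hardy--Littlewood circle method applied to the exponential sum over primes
\[
f(\alpha) = \sum_{P < p \le 2P} (\log p)\, e(\alpha p^k),
\]
with $P$ chosen so that $P^k \asymp n$ and $e(z) = e^{2\pi iz}$. Writing $s = s(k)$, the weighted count of representations of $n$ as a sum of $s$ prime $k$-th powers is
\[
R_s(n) = \int_0^1 f(\alpha)^s e(-\alpha n)\, d\alpha,
\]
and I would dissect $[0,1)$ into major arcs $\mathfrak{M}$ and minor arcs $\mathfrak{m} = [0,1)\setminus\mathfrak{M}$ in the standard Waring--Goldbach fashion.

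The major arc contribution is handled by the classical asymptotic analysis: approximation of $f(\alpha)$ around rationals $a/q$ in lowest terms by Gauss-sum twists of a smoother integral generates a main term of order $\mathfrak{S}(n)\mathfrak{J}(n)P^{s-k}$, in which the singular series $\mathfrak{S}(n)$ is bounded away from zero precisely when $n$ lies in the intended residue class modulo $K(k)$. The reductions here follow \cite{Hua65, KaWo01} with only cosmetic adjustments for the exponents in question.

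The minor arc bound is the heart of the matter. I would fix a parameter $t = t(k)$ and estimate
\[
\int_{\mathfrak{m}} |f(\alpha)|^s\, d\alpha \le \Bigl( \sup_{\alpha \in \mathfrak{m}} |f(\alpha)| \Bigr)^{s - 2t} \int_0^1 |f(\alpha)|^{2t}\, d\alpha.
\]
The supremum is controlled by combining Vaughan's identity, which converts the sum over primes into bilinear Type I and Type II sums, with the improved Weyl-type estimates flowing from the second author's recent work \cite{Wool12, Wool15} on Vinogradov's mean value theorem; the outcome is a bound of the form $\sup_{\mathfrak{m}}|f(\alpha)| \ll P^{1-\sigma(k)+\eps}$ with $\sigma(k)$ sharper than any previously available. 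The mean-value integral is estimated by passing from primes to all integers at a logarithmic cost and invoking the now near-optimal Vinogradov mean value, giving $\int_0^1 |f(\alpha)|^{2t}\, d\alpha \ll P^{2t - k + \eps}$ once $2t$ is large enough relative to $k$. Together, these force the minor arc contribution to be $o(P^{s-k})$ as soon as $\sigma(k)(s - 2t) > \eps$.

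The main obstacle I expect is the numerical optimization for each individual $k$. One must balance the strength of the Weyl-type bound against the least admissible $t$ supporting a suitable mean value, and in many cases insert one or more generating functions of smooth numbers in place of the prime generating function in a few of the $s$ variables (as in \cite{Kumc05, KaWo01}) to reduce the total number of variables required. Carrying out this calibration carefully across the range $8 \le k \le 20$ should produce precisely the values $s(k)$ displayed in Table~\ref{tab1}; the bookkeeping is delicate, and the slack in the parameters narrows appreciably at the smaller values of $k$, where the Weyl savings $\sigma(k)$ are weakest.
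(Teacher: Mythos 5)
Your overall framework (circle method, major arcs giving $\mathfrak S(n)\mathfrak J(n)P^{s-k}$, minor arcs via a sup bound times a mean value) matches the paper's, but the central analytic step as you have set it up cannot produce the values in Table~\ref{tab1}. You propose to use all $s$ variables in the common range $(P,2P]$ and to invoke a mean value $\int_0^1|f(\alpha)|^{2t}\,d\alpha\ll P^{2t-k+\eps}$ ``once $2t$ is large enough relative to $k$.'' For equal ranges, the least admissible $2t$ here is of order $k^2$: even under the full main conjecture in Vinogradov's mean value theorem (which at the time was known only for cubes), one needs roughly $2t\ge k(k+1)$, and with the exponents actually available (the paper works with $\Sigma(k)\approx 2k^2-6k+4$) one needs more. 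Since $s(k)\sim 4k\log k$ --- e.g.\ $s(8)=61<72=8\cdot 9$ and $s(20)=244\ll 420$ --- your Hölder split $s=(s-2t)+2t$ has no admissible choice of $t$: the mean value alone would already require more variables than the theorem allows. Your closing remark about inserting smooth-number generating functions gestures at the right circle of ideas but is not the mechanism used, and by itself does not close this gap.

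What the paper actually does is run a diminishing-ranges argument: the variables are placed in shrinking ranges $N^{\lambda_1}>N^{\lambda_2}>\cdots$ with exponents defined recursively in \eqref{lam.1}--\eqref{lam.3}, and the key mean value (Lemmata \ref{lem2.3.1} and \ref{lem2.3.2}) is proved by iterating a variant of Vaughan's Theorem~3 (Lemma \ref{lem2.2.2}), which in turn rests on bounds for the difference-polynomial sum $F_k(\alpha;X,H)$ sharpened by the new Vinogradov-type estimates (Lemma \ref{lem2.1.3}). It is this construction that makes $\Lambda=\sum\lambda_j$ close enough to $k$ with only about $4k\log k$ variables. On top of that, when the residual quantity $\eta^*$ is small the paper saves one further variable by Zhao's device (\cite[equation (3.10)]{Zhao14}), applying Cauchy's inequality to transfer two prime variables into a classical Weyl sum $f_k(\alpha-\beta;N)$ on a product of minor arcs; several entries of Table~\ref{tab1} depend on this extra saving, and the final numbers come from optimising the parameters $(t_k,u_k,v_k,h_k)$ of Lemma \ref{lemw.1} for each $k$. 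To repair your proof you would need to replace your equal-range mean value with a diminishing-ranges estimate of this kind (or an equivalent device); the rest of your outline would then go through essentially as you describe.
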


We remark that we have an alternate approach to bounding $H(k)$ for larger $k$ that yields 
a bound in which, for all $k\ge 20$, at most three extra variables are required relative to the 
conclusion of Theorem \ref{th1}. In combination with Theorem \ref{th2} and the 
above-cited conclusions of earlier scholars, therefore, it follows that for every exponent $k$ 
with $k\ge 3$, one has
\[
H(k)\le (4k-2)\log k+k-4.
\]

Following some discussion of basic generating functions in \S2, we adapt Vaughan's 
variant of the diminishing ranges argument in \S3 so as to accommodate recent progress 
on Vinogradov's mean value theorem. In \S4 we apply these ideas to derive the mean 
value estimates underpinning the proofs of Theorems \ref{th1} and \ref{th2}. We 
complete the proof of the latter in \S5.\par

Throughout this paper, the letter $\eps$ denotes a sufficiently small positive number. 
Whenever $\eps$ occurs in a statement, we assert that the statement holds for each 
positive $\eps$, and any implied constant in such a statement is allowed to depend on 
$\eps$. The letter $p$, with or without subscripts, is reserved for prime numbers, and 
$c$ denotes an absolute constant, not necessarily the same in all occurrences. We also 
write $e(x)$ for $\exp(2\pi \mathrm{i}x)$, and $(a, b)$ for the greatest common divisor 
of $a$ and $b$. Finally, for real numbers $\theta$, we denote by $\lfloor \theta\rfloor$ the 
largest integer not exceeding $\theta$, and by $\lceil \theta\rceil$ the least integer no 
smaller than $\theta$.\par

We use several decompositions of the unit interval into sets of major and minor arcs. In 
order to facilitate discussion, we introduce some standard notation with which to describe 
such Hardy-Littlewood dissections. When $1\le Y\le X$, we define the set of major arcs 
$\mathfrak M(Y,X)$ as the union of the intervals
\[
\mathfrak M(q,a;Y,X)=\left\{ \alpha \in [0, 1): \; |q\alpha-a| \le X^{-1}\right\}
\]
with $0\le a\le q\le Y$ and $(a, q) = 1$. We define the corresponding set of minor arcs 
by putting $\mathfrak m(Y,X)=[0,1)\setminus \mathfrak M(Y,X)$.

\section{Bounds on exponential sums} Recent progress on Vinogradov's mean value 
theorem obtained by the second author \cite{Wool12, Wool13, Wool14, Wool15} permits 
improvements to be made in bounds on exponential sums of Weyl type. In this section, we 
collect together several such improved estimates for later use. Recall the classical Weyl sum
\[
f_k(\alpha; X)=\sum_{X < x \le 2X}e\left( \alpha x^k \right),
\]
in which we suppose that $k \ge 2$ is an integer and $\alpha$ is real. We define the 
exponent $\Sigma (k)$ as in Table \ref{tab2x} when $7\le k\le 20$, and otherwise by 
putting
\[
\Sigma(k)=2k^2-6k+4.
\]
Note that whenever $k\ge 3$, one has $\Sigma(k)\le 2k^2-6k+4$. Then, when $k \ge 3$ 
is an integer, we define $\sigma_k$ by means of the relation
\begin{equation}\label{2.1.0}
\sigma_k^{-1} = \min\big\{ 2^{k-1}, \Sigma(k)\big\}. 
\end{equation}

\begin{table}[h]
\begin{center}
\begin{tabular}{cccccccc}
\toprule
$k$  & $7$  & $8$  & $9$   & $10$   & $11$   & $12$   & $13$  \\
$\Sigma(k)$ & $58.093$ & $80.867$ & $107.396$ & $137.763$ & $172.027$ & 
$210.222$ & $252.370$\\
\midrule
$k$     & $14$  & $15$   & $16$   & $17$   & $18$   & $19$   & $20$ \\
$\Sigma(k)$ & $298.487$ & $348.580$ & $402.655$ & $460.718$ & $522.771$ & 
$588.815$ & $658.854$ \\
\bottomrule
\end{tabular}\\[6pt]
\end{center}
\caption{Definition of $\Sigma(k)$ for $7 \le k \le 20$}\label{tab2x}
\end{table}

For $k\ge 3$, we define the multiplicative function $w_k(q)$ by taking
\[
w_k(p^{uk+v})=\begin{cases} kp^{-u-1/2},&\text{when $u\ge 0$ and $v=1$,}\\
p^{-u-1},&\text{when $u\ge 0$ and $2\le v\le k$,}
\end{cases}
\]
and we note that $q^{-1/2}\le w_k(q)\ll q^{-1/k}$. We are now equipped to announce 
our main tool in the shape of the following lemma.   

\begin{lemma}\label{lem2.1.1}
Suppose that $k\ge 3$. Then either one has
\begin{equation}\label{2.1.1}
f_k(\alpha; X)\ll X^{1-\sigma_k+\eps},
\end{equation}
or there exist integers $a$ and $q$ such that
\[
1\le q\le X^{k\sigma_k},\quad (a, q)=1\quad \text{and}\quad 
|q\alpha-a|\le X^{-k+k\sigma_k},
\]
in which case 
\[
f_k(\alpha;X)\ll \frac {w_k(q)X}{1+X^k|\alpha -a/q|}+X^{1/2+\eps}.
\]
  
\end{lemma}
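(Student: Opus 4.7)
The plan is to derive both conclusions from Dirichlet's theorem on Diophantine approximation together with a case analysis on the denominator of the resulting rational approximation. First I invoke Dirichlet to locate coprime integers $a,q$ with $1\le q\le X^{k-k\sigma_k}$ and $|q\alpha-a|\le X^{-k+k\sigma_k}$, and then split according to whether $q>X^{k\sigma_k}$ or $q\le X^{k\sigma_k}$.

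In the first (minor-arc) case, the hypothesis $q>X^{k\sigma_k}$ combined with the Dirichlet bound forces $q^{-1}+X^{-1}+qX^{-k}\ll X^{-k\sigma_k}$, precisely the shape needed to extract Weyl-type cancellation of strength $X^{-\sigma_k}$. When $\sigma_k^{-1}=2^{k-1}$, this is classical Weyl's inequality in the form proved by Hua; when instead $\sigma_k^{-1}=\Sigma(k)$, one invokes the sharper Weyl-type estimate derived from the new bounds on Vinogradov's mean value theorem in \cite{Wool12,Wool15}. Either route delivers \eqref{2.1.1}.

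In the complementary (major-arc) case $q\le X^{k\sigma_k}$, the pair $(a,q)$ already satisfies the hypotheses of the second alternative. I sort $X<x\le 2X$ into residue classes modulo $q$, write $\alpha=a/q+\beta$, and apply Euler--Maclaurin summation (or Poisson) on each class to reach an asymptotic of the form
\[
f_k(\alpha;X)=q^{-1}S(q,a)\int_X^{2X}e(\beta t^k)\,dt+O\bigl(X^{1/2+\eps}\bigr),
\]
where $S(q,a)=\sum_{r=1}^{q}e(ar^k/q)$ is the complete Gauss--Weyl sum. The oscillatory integral is controlled by the first derivative test, yielding $O(X(1+X^k|\beta|)^{-1})$. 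For the Gauss--Weyl sum, multiplicativity in $q$ reduces the estimate to prime powers $p^{uk+v}$; the Weil bound $|S(p,a)|\le(k-1)\sqrt{p}$ handles the tame case $v=1$, while Hensel-style lifting at ramified primes handles $2\le v\le k$. These two regimes correspond precisely to the two branches defining $w_k$, so that $|S(q,a)|\le qw_k(q)$. Assembling these ingredients yields the claimed bound.

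The main obstacle is primarily one of bookkeeping rather than of principle: one needs to confirm that the Weyl-type bound issuing from the Vinogradov mean value work of \cite{Wool12,Wool15} can indeed be packaged into the exact shape \eqref{2.1.1} with the prescribed $\Sigma(k)$, and that the additive $X^{1/2+\eps}$ error in the second alternative uniformly absorbs the low-order contributions across the full range $1\le q\le X^{k\sigma_k}$ and $|\beta|\le X^{-k+k\sigma_k}/q$. Both verifications are routine once the relevant machinery is in hand, and they are where one must be most careful to ensure that the constants $\sigma_k$, $k\sigma_k$ and the exponent $1/2$ interact compatibly for all $k\ge 3$.
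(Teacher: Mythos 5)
Your overall architecture (Dirichlet approximation, a case split on the size of $q$, a Weyl-type bound on one side and a major-arc asymptotic on the other) has the right general shape, and your treatment of the narrow major arcs $q\le X^{k\sigma_k}$, $|q\alpha-a|\le X^{-k+k\sigma_k}$ --- complete sums, Weil and Hensel-type bounds giving $|S(q,a)|\le qw_k(q)$, the first-derivative test for the oscillatory integral --- is essentially what the paper imports from \cite{KaWo01} and \cite{Kumc13}. The genuine gap is where you place the cut. You assert that $q>X^{k\sigma_k}$ already yields \eqref{2.1.1} because then $q^{-1}+X^{-1}+qX^{-k}\ll X^{-k\sigma_k}$, which you call ``precisely the shape needed''. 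It is not: Weyl's inequality raises that quantity to the power $2^{1-k}$, so in the branch $\sigma_k=2^{1-k}$ it delivers only $f_k(\alpha;X)\ll X^{1+\eps-k\sigma_k2^{1-k}}$, and $k\sigma_k2^{1-k}\ge\sigma_k$ would require $2k\ge 2^k$, which fails for every $k\ge 3$. The efficient-congruencing branch has the same defect: the estimates drawn from \cite{Wool14,Wool15} require $\alpha$ to lie in the genuine minor arcs $\mathfrak m(X,X^{k-1})$, i.e.\ that no approximation with $q\le X$ and $|q\alpha-a|\le X^{1-k}$ exists, not merely $q>X^{k\sigma_k}$. So your ``minor-arc'' case covers a large transition region on which neither cited tool gives the claimed saving.

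The missing step is the handling of that transition region, namely $X^{k\sigma_k}<q\le X$, together with small $q$ but $X^{-k+k\sigma_k}<|q\alpha-a|\le X^{1-k}$. The paper dissects at $\mathfrak M(X,X^{k-1})$ rather than at $q=X^{k\sigma_k}$: on $\mathfrak m(X,X^{k-1})$ the Weyl/Wooley estimates give \eqref{2.1.1} directly, while on all of $\mathfrak M(X,X^{k-1})$ one proves the approximation
\[
f_k(\alpha;X)\ll \frac{w_k(q)X}{1+X^k|\alpha-a/q|}+X^{1/2+\eps},
\]
the error $X^{1/2+\eps}$ being acceptable there precisely because $q\le X$ and $X^k|\alpha-a/q|\le X/q$ (your Euler--Maclaurin error would be far larger than $X$ for $q$ near $X^{k-k\sigma_k}$). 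One then observes that whenever $q>X^{k\sigma_k}$ or $|q\alpha-a|>X^{-k+k\sigma_k}$, the main term of this approximation is itself $\ll q^{-1/k}X(1+X^k|\alpha-a/q|)^{-1}\ll X^{1-\sigma_k}$, so that \eqref{2.1.1} again follows and the stated dichotomy is complete. Without this extra argument your proof does not establish the lemma.
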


\begin{proof} Suppose first that $\alpha\in \mathfrak m(X,X^{k-1})$. Then the estimate 
(\ref{2.1.1}) follows at once from \cite[Theorem 11.5]{Wool14}, the refinement of 
\cite[Theorem 11.1]{Wool14} that follows by employing the bounds recorded in 
\cite[Theorem 1.2]{Wool15}, and Weyl's inequality (see \cite[Lemma 2.4]{Vaug97}). 
Meanwhile, when $\alpha\in \mathfrak M(X,X^{k-1})$, the desired conclusion follows by 
applying the argument of the proof of \cite[Lemma 2.1]{KaWo01}. The required details 
will be readily surmised from the special case $y=x$ of the proof of 
\cite[Lemma 2.2]{Kumc13}.
\end{proof}

We also require upper bounds for the corresponding Weyl sum over prime numbers,
\[
g_k(\alpha;X)=\sum_{X <p\le 2X}e\left( \alpha p^k \right),
\] 
and these we summarise in the next lemma.

\begin{lemma}\label{lem2.1.2}
Suppose that $k \ge 4$ and $X^{2\sigma_k/3} \le P \le X^{9/20}$. Then either one has
\begin{equation}\label{2.1.4}
g_k(\alpha; X) \ll X^{1-\sigma_k/3+\eps},
\end{equation}
or there exist integers $a$ and $q$ such that
\begin{equation}\label{2.1.5} 
1\le q\le P,\quad (a,q)=1\quad \text{and}\quad |q\alpha-a|\le PX^{-k},
\end{equation}
in which case 
\begin{equation}\label{2.1.6}
g_k(\alpha;X)\ll \frac{X^{1+\eps}}{(q+X^k|q\alpha-a|)^{1/2}}.
\end{equation}
\end{lemma}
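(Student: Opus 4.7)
The strategy is to combine Vaughan's identity with the dichotomy provided by Lemma \ref{lem2.1.1}, thereby transferring information from the classical Weyl sum $f_k$ to its prime analogue $g_k$. By Dirichlet's theorem with parameter $Q=X^k/P$, I first locate coprime integers $a,q$ with $1\le q\le X^k/P$ and $|q\alpha-a|\le PX^{-k}$. If $q\le P$ then the conditions (\ref{2.1.5}) are already in place, and I turn to the major-arc analysis to establish (\ref{2.1.6}); otherwise $q>P$, and the goal is to prove (\ref{2.1.4}).

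In the minor-arc case ($q>P$), I apply Vaughan's identity with a split parameter of order $X^{2\sigma_k/3}$ to decompose $\sum_{X<n\le 2X}\Lambda(n)e(\alpha n^k)$ (which agrees with $g_k(\alpha;X)\log X$ up to a contribution from prime powers of size $O(X^{1/2}\log X)$) into $O(\log X)$ bilinear forms of Type~I and Type~II shape. Each Type~I sum $\sum_m c_m\sum_n e(\alpha(mn)^k)$ has an inner sum of the form $f_k(\alpha m^k;X/m)$, to which Lemma \ref{lem2.1.1} applies: the hypothesis $q>P\ge X^{2\sigma_k/3}$ ensures that the Dirichlet approximation to $\alpha m^k$ inherited from that to $\alpha$ places $\alpha m^k$ on the minor arcs at scale $X/m$, yielding the estimate $\ll(X/m)^{1-\sigma_k+\eps}$. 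The Type~II sums are treated by Cauchy--Schwarz in the outer variable followed by expansion of the resulting square, producing a mean-value expression again controlled by Lemma \ref{lem2.1.1}. Summing the contributions yields (\ref{2.1.4}).

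In the major-arc case ($q\le P$), I write $\alpha=a/q+\beta$ with $|\beta|\le PX^{-k}/q$, decompose $g_k(\alpha;X)$ across primitive residue classes modulo $q$, and invoke the Siegel--Walfisz theorem (which remains valid uniformly for $q\le P\le X^{9/20}$) together with partial summation to arrive at the approximation
$$g_k(\alpha;X)=\varphi(q)^{-1}S(q,a)v_k(\beta)+O(X(\log X)^{-A}),$$
where $S(q,a)=\sum_{(r,q)=1}e(ar^k/q)$ and $v_k(\beta)=\int_X^{2X}e(\beta t^k)(\log t)^{-1}\,dt$. The Hua-type bound $|S(q,a)|\ll q^{1/2+\eps}$ combined with the first-derivative estimate $|v_k(\beta)|\ll X(1+X^k|\beta|)^{-1/2}$ then yields (\ref{2.1.6}).

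The principal obstacle lies in the minor-arc analysis: for every dyadic block arising from Vaughan's identity, one must verify that the $f_k$-sum in question actually falls on the minor arcs of the correct dissection at its natural scale. The hypothesis $P\ge X^{2\sigma_k/3}$ is precisely the calibration that makes this possible, and it accounts for the factor $\sigma_k/3$ (rather than $\sigma_k$) that ultimately appears in (\ref{2.1.4}). The details closely parallel the proof of Lemma~2.2 in \cite{Kumc13}, which the authors cite in establishing Lemma \ref{lem2.1.1} above.
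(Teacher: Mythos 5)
Your minor-arc treatment is in the right spirit: the paper handles $\alpha\in\mathfrak m(P,P^{-1}X^k)$ by citing the case $\theta=1$ of \cite[Theorem~1.2]{Kumc13}, whose proof is precisely the Vaughan-identity decomposition into Type~I and Type~II bilinear sums estimated via the Weyl-sum input of Lemma~\ref{lem2.1.1} (after removing the prime powers and the logarithmic weight, as you note). The point you flag about whether the rational approximation to $\alpha m^k$ inherited from that to $\alpha$ really lands on the minor arcs at the relevant scale is a genuine technical issue, but it is handled in that reference, so \eqref{2.1.4} is fine.

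The gap is in your major-arc case. You invoke the Siegel--Walfisz theorem ``uniformly for $q\le P\le X^{9/20}$'' to obtain $g_k(\alpha;X)=\varphi(q)^{-1}S(q,a)v_k(\beta)+O(X(\log X)^{-A})$. Siegel--Walfisz is valid only for $q\le(\log X)^{A}$; for $q$ a positive power of $X$ no such pointwise asymptotic for primes in arithmetic progressions is known unconditionally (and even on GRH the error would only be $O(X^{1/2+\eps})$). Worse, for $q$ near $X^{9/20}$ the main term satisfies $\varphi(q)^{-1}S(q,a)v_k(\beta)\ll Xq^{-1/2+\eps}$, which is far smaller than your claimed error term $X(\log X)^{-A}$, so the purported approximation is vacuous exactly in the range where \eqref{2.1.6} must do its work. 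The bound \eqref{2.1.6} on these \emph{enlarged} major arcs is itself a nontrivial theorem --- it is \cite[Theorem~2]{Kumc06}, proved by a bilinear-forms argument (a Vaughan/Heath-Brown decomposition exploiting the major-arc structure of $\alpha$), not by an asymptotic formula for $g_k$. You need either to quote that result, as the paper does, or to reproduce its argument; the Siegel--Walfisz route covers only $q\le(\log X)^{A}$ and cannot be stretched to $q\le X^{9/20}$.
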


\begin{proof}
First, when $\alpha\in \mathfrak m(P,P^{-1}X^k)$, the bound \eqref{2.1.4} follows from 
the special case $\theta=1$ of \cite[Theorem 1.2]{Kumc13}. Here, one replaces the 
exponent $\sigma_k$ of that paper with the refinement made available in (\ref{2.1.0}) 
by virtue of Lemma \ref{lem2.1.1}. We note in this context that the present absence of 
von Mangoldt weights is easily accommodated by applying the usual routine: one 
eliminates the prime powers $p^h$ $(h\ge 2)$, with an acceptable error term, and then 
applies partial summation to remove the remaining logarithmic weight. On the other 
hand, when instead $\alpha \in \mathfrak M(P,P^{-1}X^k)$, the hypotheses 
\eqref{2.1.5} are in play, and the inequality \eqref{2.1.6} is a direct consequence of 
\cite[Theorem~2]{Kumc06}.
\end{proof}

Finally, we have need of a variant of a lemma of Vaughan (see \cite[Lemma 1]{Vaug86}) 
dealing with an exponential sum over a difference polynomial, namely
\[
F_k(\alpha;X,H)=\sum_{1\le h\le H}\sum_{X<x\le 2X}e\left( \alpha \left( (x+h)^k-x^k 
\right) \right),
\]
where $1\le H\le X$.

\begin{lemma}\label{lem2.1.3}
Suppose that $k \ge 4$. Then either one has
\begin{equation}\label{2.1.7}
F_k(\alpha;X,H)\ll HX^{1-\sigma_{k-1}+\eps},
\end{equation}
or there exist integers $a$ and $q$ such that
\begin{equation}\label{2.1.8} 
1\le q\le X^{(k-2)\sigma_{k-1}},\quad (a, q) = 1\quad \text{and}\quad 
|q\alpha-a|\le X^{(k-2)\sigma_{k-1}}(HX^{k-1})^{-1},
\end{equation}
in which case
\begin{equation}\label{2.1.9}
F_k(\alpha;X,H)\ll \frac{q^{-1/(k-2)}HX^{1+\eps}}{1+HX^{k-1}|\alpha-a/q|}+
HX^{1/3+\eps}.
\end{equation}
\end{lemma}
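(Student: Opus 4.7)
The plan is to follow the shape of Vaughan's proof of \cite[Lemma~1]{Vaug86}, replacing his use of the classical Weyl inequality with the sharper dichotomy supplied by Lemma \ref{lem2.1.1}. The key structural observation is the decomposition
\[
F_k(\alpha;X,H)=\sum_{1\le h\le H}T_h(\alpha),\qquad T_h(\alpha)=\sum_{X<x\le 2X}e\bigl(\alpha((x+h)^k-x^k)\bigr),
\]
combined with the fact that, for each fixed $h$, the polynomial $(x+h)^k-x^k$ has degree $k-1$ in $x$ and leading coefficient $kh$. Thus $T_h(\alpha)$ is a Weyl-type exponential sum of degree $k-1$ with effective leading coefficient $\alpha kh$.

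I would first establish a polynomial generalization of Lemma \ref{lem2.1.1} (for exponent $k-1$) that applies to arbitrary polynomials with prescribed leading term---this generalization is essentially automatic from the proof of Lemma \ref{lem2.1.1}, since the Vinogradov mean value theorem and the Weyl-differencing step underpinning it are controlled solely by the leading coefficient. Applied to each $T_h(\alpha)$, this yields the dichotomy: either $T_h(\alpha)\ll X^{1-\sigma_{k-1}+\eps}$, or $\alpha kh$ admits a strong rational approximation and $T_h$ satisfies a sharp major-arc bound. If the minor-arc alternative holds for every $h\le H$, then summing over $h$ immediately gives \eqref{2.1.7}. Otherwise, a short bookkeeping argument---exploiting the factor $h\le H$ in the leading coefficient $\alpha kh$ so as to absorb a factor of $H$ into the denominator of the rational approximation to $\alpha$ itself---shows that $\alpha$ must satisfy \eqref{2.1.8}. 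With this master approximation $a/q$ in hand, I would then revisit each $T_h(\alpha)$: writing $kh\alpha=kha/q+kh(\alpha-a/q)$, reducing $kha/q$ to lowest terms with denominator $q/(q,kh)$, and applying the major-arc clause of the generalized Lemma \ref{lem2.1.1} furnishes an individual bound whose main term has shape $\bigl(q/(q,kh)\bigr)^{-1/(k-1)}X^{1+\eps}/(1+HX^{k-1}|\alpha-a/q|)$, after which summation over $h$ yields \eqref{2.1.9}.

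The main obstacle I anticipate is the averaging step over $h$: one must verify that summing the weights $(q,kh)^{1/(k-1)}$ over $1\le h\le H$ conspires with the trivial $q^{-1/(k-1)}$ from the individual major-arc bound to produce the sharpened exponent $-1/(k-2)$ on $q$ in \eqref{2.1.9} (rather than the naive $-1/(k-1)$), while simultaneously ensuring that the various individual error terms collapse to the uniform $HX^{1/3+\eps}$ claimed in the statement. This is a delicate divisor-type calculation, carried out by Vaughan in his original paper, which must be re-executed carefully with $\sigma_{k-1}$ in place of the older Weyl exponent.
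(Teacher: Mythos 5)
Your overall architecture---a dichotomy driven by rational approximation, with a Weyl--Vinogradov bound for the degree-$(k-1)$ inner sums on the minor arcs and an explicit major-arc estimate---is in the right spirit, but two of your key steps do not hold up. First, the implication ``if the minor-arc alternative fails for some $h$, then $\alpha$ satisfies \eqref{2.1.8}'' is false, and the heuristic of ``absorbing a factor of $H$ into the denominator'' runs in the wrong direction: a good rational approximation $a'/q'$ to $kh\alpha$ yields an approximation to $\alpha$ whose denominator is \emph{inflated} to a divisor of $khq'$, which can be as large as $kHX^{(k-1)\sigma_{k-1}}$, far beyond the bound $X^{(k-2)\sigma_{k-1}}$ demanded by \eqref{2.1.8}. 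Your case analysis therefore omits the entire intermediate regime in which some $T_h$ exceeds $X^{1-\sigma_{k-1}}$ and yet $\alpha$ admits no approximation satisfying \eqref{2.1.8}. The paper closes this gap by running the dichotomy on $\alpha$ itself: one applies Dirichlet's theorem with modulus $Q=CHX^{k-2}$, proves \eqref{2.1.7} when the resulting denominator exceeds $X$ by the method of \cite[Lemma~10.3]{VW91} with the new mean value theorems of \cite{Wool14, Wool15} substituted for \cite[Lemma~10.2]{VW91} (this, not the major arcs, is where the new input enters, and for $4\le k\le 7$ one falls back on \cite[Lemma~1]{Vaug86}), and for $q\le X$ invokes \cite[Lemma~2]{Vaug86}, whose secondary term $Hq^{(k-2)/(k-1)+\eps}$ recovers \eqref{2.1.7} whenever \eqref{2.1.8} fails.

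Second, the mechanism you propose for the exponent $-1/(k-2)$ cannot work: since $\sum_{h\le H}(q,kh)^{1/(k-1)}\ll Hq^{\eps}+q^{1/(k-1)+\eps}$, averaging the gcd weights over $h$ against the individual bound $q^{-1/(k-1)}$ yields at best $q^{-1/(k-1)+\eps}$ in the main term, and $q^{-1/(k-1)}$ is \emph{larger} than the required $q^{-1/(k-2)}$. There is in fact nothing to re-execute here: the major-arc bound \eqref{2.1.9} is precisely Vaughan's Lemma~2, an estimate involving neither $\sigma_{k-1}$ nor the new Vinogradov technology, and the paper cites it verbatim, noting only that under \eqref{2.1.8} one has $q^{(k-2)/(k-1)}\le X^{1/3}$ so that the secondary term is acceptable. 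You have inverted the location of the difficulty: the delicate new work lies entirely in the minor-arc bound \eqref{2.1.7}, while the major arcs are a citation.
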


\begin{proof}
Put $C=k^{3k}$ and let $Q=CHX^{k-2}$. Define $\mathfrak n$ to be the set of points 
$\alpha \in [0,1)$ with the property that whenever $a\in \mathbb Z$, $q\in \mathbb N$, 
$(a,q)=1$ and $|q\alpha-a|\le Q^{-1}$, then $q>X$. Suppose in the first instance that 
$k\ge 8$. We bound $|F_k(\alpha;X,H)|$ for $\alpha\in \mathfrak n$ by applying the 
method of proof of \cite[Lemma 10.3]{VW91}, in which we formally take 
$M=\tfrac{1}{2}$ and $R=2$. By substituting the conclusion of 
\cite[Theorem 11.5]{Wool14}, and the refinement of \cite[Theorem 11.1]{Wool14} 
utilising \cite[Theorem 1.2]{Wool15}, for \cite[Lemma 10.2]{VW91}, one obtains the 
bound
\begin{equation}\label{2.1.10}
\sup_{\alpha\in \mathfrak n}|F_k(\alpha;X,H)|\ll X^{1-\sigma_{k-1}+\eps}H.
\end{equation}
When $4\le k\le 7$, meanwhile, the bound (\ref{2.1.10}) with $\sigma_{k-1}=2^{2-k}$ 
follows from \cite[Lemma~1]{Vaug86}. This establishes (\ref{2.1.7}) when 
$\alpha\in \mathfrak n$.\par

Suppose next that $\alpha\not\in \mathfrak n$. By Dirichlet's theorem on Diophantine 
approximation, there exist integers $a$ and $q$ with
\[
1\le q\le Q,\quad (a,q)=1\quad \text{and}\quad |q\alpha-a|\le Q^{-1}.
\]
Since $\alpha \not\in \mathfrak n$, it follows that $q\le X$. Then 
\cite[Lemma 2]{Vaug86} yields the bound
\begin{equation}\label{2.1.13}
F_k(\alpha;X,H)\ll \frac {q^{-1/(k-2)}HX^{1+\eps}}{1+HX^{k-1}|\alpha -a/q|}+ 
Hq^{(k-2)/(k-1)+\eps}.
\end{equation}
If either inequality in \eqref{2.1.8} fails, then this implies \eqref{2.1.7} once more. 
Finally, when \eqref{2.1.8} holds, we have 
$q^{(k-2)/(k-1)}\le X^{(k-2)\sigma_{k-1}}\le X^{1/3}$, and \eqref{2.1.9} follows from 
\eqref{2.1.13}. This completes the proof of the lemma.
\end{proof}

\section{Mean-values for $k$th powers}
We describe in this section an enhanced diminishing ranges argument of Vaughan 
\cite{Vaug86} of use for mean values of intermediate and larger orders. Our first lemma is 
a variant of \cite[Theorem 3]{Vaug86} that makes use of Lemma \ref{lem2.1.3}.

\begin{lemma}\label{lem2.2.2}
Let $k\ge 4$ be an integer, and define $\sigma_k$ by means of the relation 
\eqref{2.1.0}. Also, let $s=\left\lfloor \frac 12(k+3)\right\rfloor$, consider real numbers 
$\lambda_1,\dots,\lambda_s$ with
\[
\lambda_1=1,\quad 1\ge \lambda_2\ge 1 - 1/k,\quad \lambda_2 \ge 
\lambda_i>1/2\quad (2 \le i \le s),
\]
and set $\nu=k\lambda_2-k+1$. Consider a large real number $N$, and put 
$N_i=N^{\lambda_i}$ $(1\le i\le s)$. Let $R(m)$ be a non-negative arithmetic function. 
Finally, with $C = C(k, \eps) \ge 2$, define
\[
\mathcal G(\alpha)=\sum_{1\le m\le CN_2^k}R(m)e(\alpha m)
\]
and
\[
\mathcal F_j(\alpha)=\mathcal G(\alpha)\prod_{i = j}^sf_k(\alpha;N_i)\quad (j = 1,2).
\]
Then
\begin{equation}\label{4.w}
\int_0^1|\mathcal F_1(\alpha)|^2\,{\rm d}\alpha \ll \left( 
N+N^{1+\nu-\sigma_{k-1}+\eps}\right)\int_0^1|\mathcal F_2(\alpha)|^2\,{\rm d}
\alpha +\mathcal F_1(0)^2N^{\eps-k}.
\end{equation}
\end{lemma}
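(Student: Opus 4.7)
My plan is to exploit the factorisation $\mathcal F_1(\alpha) = \mathcal F_2(\alpha)f_k(\alpha;N)$, whence $\int_0^1|\mathcal F_1|^2\,{\rm d}\alpha = \int_0^1|\mathcal F_2|^2|f_k(\alpha;N)|^2\,{\rm d}\alpha$. Opening the square and writing $x = y+h$ yields
\[
|f_k(\alpha;N)|^2 = N + 2\Re \sum_{h=1}^{N-1} T_h(\alpha),\quad T_h(\alpha) = \sum_{N<y\le 2N-h} e\bigl(\alpha((y+h)^k - y^k)\bigr),
\]
so the diagonal contributes precisely $N\int|\mathcal F_2|^2\,{\rm d}\alpha$, supplying the first term in \eqref{4.w}; the task is to estimate the off-diagonal.

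The crux is a Fourier-support truncation. The coefficients $r_2(n) = \int|\mathcal F_2|^2 e(\alpha n)\,{\rm d}\alpha$ of the trigonometric polynomial $|\mathcal F_2|^2$ are supported on $|n| \le CN_2^k$ for suitable $C = C(k,s)$, since $\lambda_i \le \lambda_2$ for $i \ge 2$. Because $(y+h)^k - y^k \ge khN^{k-1}$ when $y > N$ and $h \ge 1$, the identity
\[
\int_0^1|\mathcal F_2(\alpha)|^2 T_h(\alpha)\,{\rm d}\alpha = \sum_{N<y\le 2N-h} r_2\bigl((y+h)^k - y^k\bigr)
\]
vanishes once $h > C_1 N^\nu$, where $\nu = k\lambda_2 - k + 1$. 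Setting $H^* = \lceil C_1 N^\nu\rceil$ therefore truncates the off-diagonal sum to $h \le H^*$, yielding (after an easily controlled adjustment of the $y$-range) the integral $\int_0^1 |\mathcal F_2|^2 F_k(\alpha;N,H^*)\,{\rm d}\alpha$. Applying Lemma~\ref{lem2.1.3} with $H = H^*$, the set on which \eqref{2.1.7} delivers $|F_k(\alpha;N,H^*)| \ll H^*N^{1-\sigma_{k-1}+\eps}$ contributes at most
\[
H^*N^{1-\sigma_{k-1}+\eps}\int_0^1|\mathcal F_2|^2\,{\rm d}\alpha = N^{1+\nu-\sigma_{k-1}+\eps}\int_0^1|\mathcal F_2|^2\,{\rm d}\alpha,
\]
accounting for the second term in \eqref{4.w}.

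The remaining major-arc set $\mathfrak M^*$ described by \eqref{2.1.8} is where the real work lies. On each arc $\mathfrak M(q,a)\subset \mathfrak M^*$ I would combine the structured bound \eqref{2.1.9} with the major-arc approximation of Lemma~\ref{lem2.1.1} applied to each of the $s-1$ factors $f_k(\alpha;N_i)$ in $\mathcal F_2$---harvesting the product $w_k(q)^{2(s-1)} \ll q^{-2(s-1)/k}$---then integrate against the decay factor $(1+H^*N^{k-1}|\alpha-a/q|)^{-1}$ and sum over the moduli $q \le N^{(k-2)\sigma_{k-1}}$ with $(a,q)=1$. The combined $q$-decay $q^{-1/(k-2)-2(s-1)/k}$ should be strong enough to beat $\phi(q)\le q$ and leave an acceptable remainder of size $\mathcal F_1(0)^2 N^{\eps-k}$. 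The principal obstacle is this major-arc estimate: one must verify the nesting of $\mathfrak M^*$ within the Lemma~\ref{lem2.1.1} major arcs for each $N_i$ with $i \ge 2$ (which is where the hypothesis $\lambda_i > 1/2$ is employed via careful size comparisons between $\sigma_k$ and $\sigma_{k-1}$), and check that the exponent $s - 1 = \lfloor (k+1)/2\rfloor$ suffices to render the $q$-sum absolutely convergent up to a factor of $N^\eps$.
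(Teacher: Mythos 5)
Your reduction is exactly the paper's (which is Vaughan's, from the proof of \cite[Theorem 3]{Vaug86}): open $|f_k(\alpha;N)|^2$, let the diagonal give $N\int_0^1|\mathcal F_2|^2\,{\rm d}\alpha$, use the fact that the Fourier support of $|\mathcal F_2(\alpha)|^2$ lies in $|n|\ll N_2^k$ together with $(y+h)^k-y^k\ge khN^{k-1}$ to truncate to $h\le H\asymp N^{\nu}$, and then estimate $M=\int_0^1F_k(\alpha;N,H)|\mathcal F_2(\alpha)|^2\,{\rm d}\alpha$ via Lemma~\ref{lem2.1.3}, the minor-arc branch \eqref{2.1.7} yielding the term $N^{1+\nu-\sigma_{k-1}+\eps}\int_0^1|\mathcal F_2|^2\,{\rm d}\alpha$. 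All of that is sound and is what the paper does.

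The genuine gap is the major-arc integral over the set $\mathfrak M^*$ described by \eqref{2.1.8}, which you correctly identify as ``where the real work lies'' but then only sketch, and the sketch as written does not close. First, the quantitative point: even granting the pointwise approximations, the bound you propose to harvest, $w_k(q)^{2(s-1)}\ll q^{-2(s-1)/k}$, gives at best $q^{-1}$ (since $2(s-1)\ge k$), so after multiplying by $\phi(q)\le q$ and the factor $q^{-1/(k-2)}$ from \eqref{2.1.9} the modulus sum is $\sum_{q\le Q_0}q^{-1/(k-2)}\asymp Q_0^{1-1/(k-2)}$ with $Q_0=N^{(k-2)\sigma_{k-1}}=N^{1/(2(k-3))}$; this is a positive power of $N$, so your remainder comes out as $\mathcal F_1(0)^2N^{-k+1/(2(k-2))+\eps}$ rather than $\mathcal F_1(0)^2N^{\eps-k}$. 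One needs either the full multiplicative structure of $w_k$ (for instance, $w_k(p)=kp^{-1/2}$, far smaller than $p^{-1/k}$) to make the Euler product converge, or Vaughan's actual treatment of this integral. Second, the nesting you flag as needing verification in fact fails: one computes $(k-2)\sigma_{k-1}=\tfrac{1}{2(k-3)}$ while $N_i^{k\sigma_k}\le N^{k\sigma_k}=N^{k/(2(k-1)(k-2))}$ and $k(k-3)<(k-1)(k-2)$, so the arcs of Lemma~\ref{lem2.1.3} are strictly wider than those on which Lemma~\ref{lem2.1.1} supplies the $w_k(q)$ approximation, even for $N_i=N$; on the leftover portion one must fall back on the Weyl branch \eqref{2.1.1}, and also handle the secondary terms $N_i^{1/2+\eps}$, none of which is done. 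The paper avoids all of this by observing that the major-arc integral is precisely the quantity $K$ of \cite[equation (2.21)]{Vaug86} and importing Vaughan's analysis of it wholesale, the only new verification being that the enlarged arcs still satisfy $(k-2)\sigma_{k-1}\le\tfrac12$. As it stands, your proposal establishes the reduction and the minor-arc term but leaves the term $\mathcal F_1(0)^2N^{\eps-k}$ unproved.
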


\begin{proof}
Write
$$R_1(n)=\sum_{N_1<x_1\le 2N_1}\ldots \sum_{N_s<x_s\le 2N_s}
\sum_{1\le m\le (2N_2)^k}R(m),$$
in which the summation is subject to the condition $n=m+x_1^k+\ldots +x_s^k$. Then 
it follows via orthogonality that the mean value on the left hand side of (\ref{4.w}) is 
bounded above by $\sum_nR_1(n)^2$. We may therefore follow the argument of the 
proof of \cite[Theorem 3]{Vaug86} leading to formula (2.8) of the latter source. That 
formula defines the  integral
\[
M=\int_0^1 F_k(\alpha;N,H)|\mathcal F_2(\alpha)|^2\, {\rm d}\alpha,
\]
where $H=\min \{CN^{\nu},N\}$.\par

Define the set of major arcs $\mathfrak N$ to be the union of the intervals
\[
\mathfrak N(q,a)=\mathfrak M
(q,a;N^{(k-2)\sigma_{k-1}},HN^{k-1-(k-2)\sigma_{k-1}}),
\]
over integers $a$ and $q$ satisfying (\ref{2.1.8}). Also, define the function 
$G_1(\alpha)$ on $[0,1)$ by taking
\[
G_1(\alpha)=\frac{q^{-1/(k-2)}HN}{1+HN^{k-1}|\alpha-a/q|},
\]
when $\alpha\in \mathfrak N(q,a)\subseteq \mathfrak N$, and otherwise by putting 
$G_1(\alpha)=0$. Then by applying Lemma \ref{lem2.1.3}, we obtain the estimate
\begin{equation}\label{4.wc}
M\ll N^\eps \int_{\mathfrak N}G_1(\alpha)|\mathcal F_2(\alpha)|^2\,{\rm d}\alpha+
N^{1+\nu -\sigma_{k-1}+\eps}\int_0^1|\mathcal F_2(\alpha)|^2\, {\rm d}\alpha .
\end{equation}
This inequality replaces \cite[equations (2.17)--(2.21)]{Vaug86}. To complete the proof 
of the lemma, we follow the remainder of Vaughan's argument on 
\cite[pages 451--452]{Vaug86}, noting that the first integral on the right hand side of 
\eqref{4.wc} is the quantity $K$ defined in \cite[equation (2.21)]{Vaug86}. We remark 
that although our set of major arcs $\mathfrak N$ is somewhat larger than the 
respective set in Vaughan's paper, this does not pose any problems, since we 
nonetheless have $(k-2)\sigma_{k-1}\le \frac 12$, which serves as a satisfactory 
substitute for the relevant bound $(k-2)2^{2-k}\le \frac 12$ at the top of 
\cite[page 452]{Vaug86}).
\end{proof}

Before introducing our basic mean-value estimate, we define a set of admissible exponents 
for $k$th powers as follows. Let $t=t_k$ and $u=u_k$ be positive integral parameters to 
be chosen in due course. Then, with $\theta =1-1/k$, we first set 
\begin{equation}\label{lam.1}
\lambda_i=(\theta +\sigma_{k-1}/k)^{i-1}\quad (1\le i\le u+1).
\end{equation}
Finally, we define $\lambda_{u+2}, \dots, \lambda_{u+t}$ by putting
\begin{align}
\lambda_{u+2}&=\frac{k^2-\theta^{t-3}}{k^2+k-k\theta^{t-3}}\lambda_{u+1},
\label{lam.2}\\
\lambda_{u+j}&=\frac {k^2-k-1}{k^2+k-k\theta^{t-3}}\theta^{j-3}\lambda_{u+1}\quad 
(3\le j\le t),\label{lam.3}
\end{align}
and then set
\begin{equation}\label{lam.4}
\Lambda=\lambda_1+\ldots +\lambda_{t+u}.
\end{equation}

\begin{lemma}\label{lem2.3.1}
Let $k$, $t$ and $u$ be positive integers with $k\ge 3$ and 
$t\ge \left\lfloor \frac 12(k+3)\right\rfloor$, and let $v$ be a non-negative real number. 
Define the exponents $\lambda_j$ and $\Lambda$ by means of 
\eqref{lam.1}-\eqref{lam.4}. Then, when $N$ is large,
\begin{equation}\label{2.3.1}
\int_0^1 |f_k(\alpha; N)|^v\prod_{j=1}^{t+u}\left| g_k (\alpha;N^{\lambda_j})\right|^2 
\, {\rm d}\alpha \ll N^{2\Lambda+v-k+\eps}\left( 1+N^{k-\Lambda-v\sigma_k}\right).
\end{equation}
\end{lemma}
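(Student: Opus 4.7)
The plan is to combine a Hardy--Littlewood dissection of $[0,1)$ that handles the $|f_k(\alpha;N)|^v$ factor with an iterative application of Lemma \ref{lem2.2.2} to control the product of prime exponential sums. I would dissect $[0,1)=\mathfrak{M}\cup\mathfrak{m}$, where $\mathfrak{M}=\mathfrak{M}(N^{k\sigma_k},N^{k-k\sigma_k})$ is the family of major arcs implicit in Lemma \ref{lem2.1.1}. On the minor arcs $\mathfrak{m}$, the Weyl--type bound $|f_k(\alpha;N)|\ll N^{1-\sigma_k+\eps}$ from Lemma \ref{lem2.1.1} makes the minor-arc contribution at most $N^{v(1-\sigma_k)+\eps}J$, where
\[
J=\int_0^1 \prod_{j=1}^{t+u}\left|g_k(\alpha;N^{\lambda_j})\right|^2\,d\alpha .
\]
On the major arcs the pointwise bound $|f_k(\alpha;N)|\ll N w_k(q)/(1+N^k|\alpha-a/q|)$, combined with standard major-arc analysis of the mean value $\int_{\mathfrak{M}}\prod|g_k|^2\,d\alpha$, yields a contribution of size $N^{2\Lambda+v-k+\eps}$. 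Granted the target bound $J\ll N^{2\Lambda-k+\eps}+N^{\Lambda+\eps}$, these two contributions combine to produce the stated estimate.

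The heart of the matter is therefore the bound on $J$. Since primes form a subset of the integers, I can replace each $|g_k(\alpha;N^{\lambda_j})|^2$ by $|f_k(\alpha;N^{\lambda_j})|^2$ (valid because the counting function behind the mean value is non-negative), reducing to an integer mean value $\tilde J$ to which I iteratively apply Lemma \ref{lem2.2.2}. At the $l$-th iteration, I identify $\mathcal{G}(\alpha)$ with the generating function for the sum of $k$th powers at the smaller scales $\lambda_{l+s},\ldots,\lambda_{t+u}$ (absorbing these factors into the arithmetic function $R(m)$) and take $N_i=N^{\lambda_{l+i-1}/\lambda_l}$ for $1\le i\le s$, with $s=\lfloor(k+3)/2\rfloor$. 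For $1\le l\le u$, the prescription \eqref{lam.1} gives $\lambda_{l+1}/\lambda_l=\theta+\sigma_{k-1}/k$, so the quantity $\nu$ appearing in Lemma \ref{lem2.2.2} equals $\sigma_{k-1}$; the pre-factor $N+N^{1+\nu-\sigma_{k-1}+\eps}$ in \eqref{4.w} then collapses to $N^{1+\eps}$, which after rescaling peels off the $l$-th factor at a cost of $N^{\lambda_l+\eps}$ and a residual term of shape $N^{2\Lambda-k+\eps}$. For $l>u+1$ the prescriptions \eqref{lam.2}--\eqref{lam.3} revert to Vaughan's classical diminishing-ranges ratio $\theta$, and further iterations of Lemma \ref{lem2.2.2} (or the original argument of \cite{Vaug86}) continue the peeling until only $s$ factors remain. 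The final short mean value is then bounded via Vinogradov's mean-value theorem in the refined form of \cite{Wool12,Wool15}. Multiplying the accumulated pre-factors together with the telescoping residuals yields $\tilde J\ll N^{2\Lambda-k+\eps}+N^{\Lambda+\eps}$.

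The main technical obstacle is the bookkeeping: one must verify that the prescribed sequence $(\lambda_j)$ actually optimises the iteration, that the two regimes \eqref{lam.1} and \eqref{lam.2}--\eqref{lam.3} mesh smoothly at the transition $j=u+1$ so that the peeling continues without obstruction, and that the accumulation of the residual terms $\mathcal{F}_1^{(l)}(0)^2 N_l^{-k}$ over all iterations telescopes to exactly the claimed $N^{2\Lambda-k+\eps}$. The ratio $\theta+\sigma_{k-1}/k$ in \eqref{lam.1} is calibrated precisely so that $\nu=\sigma_{k-1}$, rendering the pre-factor in \eqref{4.w} as sharp as possible; likewise, the definitions \eqref{lam.2}--\eqref{lam.3} are chosen so that the final $t-1$ diminishing-ranges steps balance cleanly against the residual terms. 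A further subtlety is that the admissibility constraints of Lemma \ref{lem2.2.2} (namely $\lambda_2\ge 1-1/k$ and $\lambda_2\ge\lambda_i>1/2$ for $i\ge 3$) must be verified at each iteration step, but these follow from the monotonicity of the $\lambda_j$'s together with the inequality $\theta+\sigma_{k-1}/k\ge\theta=1-1/k$.
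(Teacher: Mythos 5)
Your overall architecture coincides with the paper's: the dissection at height $N^{k\sigma_k}$, the Weyl bound from Lemma \ref{lem2.1.1} on the minor arcs, the pointwise major-arc bounds (where, to make the sum over $q$ converge when $v=0$, you do need the $q^{-1/2}$ saving from Lemma \ref{lem2.1.2} applied to the two largest prime sums, not just "standard analysis"), the majorisation of each $|g_k|^2$ by $|f_k|^2$, and the backward induction that peels off the factors $j=1,\dots,u$ one at a time via Lemma \ref{lem2.2.2}, using that the ratio in \eqref{lam.1} forces $\nu=\sigma_{k-1}$. All of that matches.

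The genuine gap is in your base case. The paper stops the peeling at $j=u+1$ and disposes of the entire tail $\int_0^1\prod_{j=u+1}^{u+t}|f_k(\alpha;N^{\lambda_j})|^2\,{\rm d}\alpha\ll N^{\Lambda_{u+1}+\eps}$ by a single appeal to Vaughan \cite[Theorem 6.1]{Vaug97}: the exponents \eqref{lam.2}--\eqref{lam.3} are precisely Vaughan's diminishing-ranges exponents, rescaled by $\lambda_{u+1}$, and that classical theorem is what delivers the diagonal bound. Your plan to instead keep applying Lemma \ref{lem2.2.2} through the tail and then finish with Vinogradov's mean value theorem fails on both counts. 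First, the peeling cannot enter the tail block: one computes $\lambda_{u+3}/\lambda_{u+2}=(k^2-k-1)/(k^2-\theta^{t-3})<1-1/k$, so the rescaled hypothesis $\lambda_2\ge 1-1/k$ of Lemma \ref{lem2.2.2} is violated there (equivalently $\nu<0$, and the difference-polynomial construction with $H=\min\{CN^\nu,N\}$ degenerates); the ratios only "revert to $\theta$" from the fourth step of the tail onwards. Second, and more seriously, even if you could peel down to $s=\lfloor\frac12(k+3)\rfloor$ remaining factors, the residual mean value involves roughly $k+3$ variables whose ranges are all within a bounded power of one another, and no diagonal bound for a single $k$-th power equation in that many near-equal variables follows from \cite{Wool12,Wool15} (or from any known result): the refined Vinogradov estimates feed into Lemma \ref{lem2.1.1} and Lemma \ref{lem2.1.3}, not into a paucity statement of this kind. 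The diagonal behaviour here comes entirely from the Davenport--Vaughan diminishing-ranges mechanism, which is exactly what the tail exponents are calibrated for; replacing that citation by VMVT leaves the induction without a valid base.
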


\begin{proof} 
We establish the bound \eqref{2.3.1} through an application of the Hardy-Littlewood 
method. We begin by examining an auxiliary mean value, establishing the bound
\begin{equation}\label{2.3.6a}
\int_0^1\prod_{j=i}^{t+u}|f_k(\alpha;N^{\lambda_j})|^2 \, {\rm d}\alpha \ll 
N^{2\Lambda_i-k+\eps}\left( 1+N^{k-\Lambda_i}\right),
\end{equation}
for $1\le i\le u+1$, where $\Lambda_i=\lambda_i+\ldots +\lambda_{t+u}$. Observe first 
that, by orthogonality, the bound
\[
\int_0^1\prod_{j=u+1}^{u+t}\left| f_k \left( \alpha;N^{\lambda_j}\right)\right|^2\, 
{\rm d}\alpha \ll N^{\Lambda_{u+1}+\eps}
\]
follows as a direct consequence of Vaughan~\cite[Theorem 6.1]{Vaug97}. When 
$1\le i \le u$, meanwhile, we apply Lemma \ref{lem2.2.2} with 
$\lambda_2=\theta+\sigma_{k-1}/k$ and $R(m)$ equal to the number of integral 
representations of the integer $m$ in the form
$$m=x_{s+1}^k+\ldots +x_{t+u}^k,$$
with $N^{\lambda_j}<x_j\le 2N^{\lambda_j}$ $(s+1\le j\le t+u)$. Since one then has 
$k\lambda_2-k+1=\sigma_{k-1}$, we deduce that whenever the estimate \eqref{2.3.6a} 
holds for $i=I+1$, then it holds also for $i=I$. The desired bound \eqref{2.3.6a} 
therefore follows for $1\le i\le u+1$ by backwards induction, starting from the base case 
$i=u+1$.\par

Now, with $P = N^{k\sigma_k}$, put $\mathfrak M = \mathfrak M(P, N^kP^{-1})$ and 
$\mathfrak m=[0,1)\setminus \mathfrak M$. Then by Lemma \ref{lem2.1.1},
\[
\sup_{\alpha \in \mathfrak m}|f_k(\alpha;N)|\ll N^{1-\sigma_k+\eps}.
\]
Furthermore, by comparing the underlying Diophantine equations, we have
\[
\int_0^1\prod_{j=1}^{t+u}|g_k(\alpha; N^{\lambda_j}) |^2 \, {\rm d}\alpha \le 
\int_0^1\prod_{j=1}^{t+u}|f_k(\alpha;N^{\lambda_j})|^2 \, {\rm d}\alpha.
\]
We therefore deduce from \eqref{2.3.6a} that
\begin{align}
\int_{\mathfrak m}|f_k(\alpha;N)|^v\prod_{j=1}^{t+u}\left|g_k(\alpha;N^{\lambda_j}) 
\right|^2\, {\rm d}\alpha &\ll \Bigl( \sup_{\alpha \in \mathfrak m}|f_k(\alpha;N)|
\Bigr)^v\int_{\mathfrak m}\prod_{j=1}^{t+u}\left|g_k(\alpha;N^{\lambda_j}) 
\right|^2\, {\rm d}\alpha\notag \\
&\ll N^{2\Lambda+v-k+\eps}(N^{-v\sigma_k}+N^{k-\Lambda-v\sigma_k}).
\label{2.3.4}
\end{align}

\par In order to estimate the contribution of the major arcs $\mathfrak M$ to the left side 
of \eqref{2.3.1}, we note that when $\alpha \in \mathfrak M(q, a; P, N^kP^{-1})\subseteq 
\mathfrak M$, it follows from Lemmata \ref{lem2.1.1} and \ref{lem2.1.2} that
\[
f_k(\alpha;N)\ll \frac{q^{-1/k}N}{1+N^k|\alpha - a/q|},
\]
and
\[
g_k(\alpha;N_j)\ll \frac{q^{-1/2}N_j^{1+\eps}}{(1+N_j^k|\alpha-a/q|)^{1/2}}\quad 
(j = 1,2).
\]
Applying these two inequalities in combination with a trivial bound for $g_k(\alpha; N_j)$ 
$(j\ge 3)$, we find that 
\[
\int_{\mathfrak M}|f_k(\alpha;N)|^v\prod_{j=1}^{t+u}\left|g_k( \alpha;N^{\lambda_j}) 
\right|^2\, {\rm d}\alpha \ll \sum_{\substack{0\le a\le q\le P\\ (a,q)=1}}
\int_{\mathfrak M(q,a)}\frac{q^{-2-v/k}N^{2\Lambda+v+\eps}}{(1+N^k
|\alpha -a/q|)^{1+v}}.
\]
Thus we arrive at the estimate
\[
\int_{\mathfrak M}|f_k(\alpha;N)|^v\prod_{j=1}^{t+u}\left|g_k( \alpha;N^{\lambda_j}) 
\right|^2\, {\rm d}\alpha \ll N^{2\Lambda+v-k+\eps},
\]
which, in combination with \eqref{2.3.4}, confirms the desired bound \eqref{2.3.1}.
\end{proof}

An immediate consequence of Lemma \ref{lem2.3.1} supplies useful mean value estimates 
in the Waring--Goldbach problem.

\begin{lemma}\label{lem2.3.2}
Let $k$, $t$ and $u$ be positive integers with $k\ge 3$ and 
$t\ge \left\lfloor \frac 12(k+3)\right\rfloor$, and let $w$ be a non-negative integer. Define 
the exponents $\lambda_j$ and $\Lambda$ by means of \eqref{lam.1}-\eqref{lam.4}, and 
put $\eta=\max\{0,k-\Lambda -2w\sigma_k\}$. Then when $N$ is sufficiently large, one 
has
\[
\int_0^1 |g_k(\alpha; N)|^{2w}\prod_{j=1}^{t+u}\left| 
g_k (\alpha;N^{\lambda_j})\right|^2 \, {\rm d}\alpha \ll 
N^{2\Lambda+2w-k+\eta+\eps}.
\]
\end{lemma}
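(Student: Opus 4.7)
The plan is to derive Lemma~\ref{lem2.3.2} as an immediate corollary of Lemma~\ref{lem2.3.1} applied with $v=2w$, after first passing from $g_k$ to $f_k$ in the prime-variable factor of exponent $2w$.

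I would first argue by Fourier expansion and orthogonality that the integral on the left of the desired inequality equals the number of solutions to the Diophantine equation
\[
\sum_{i=1}^{w}(p_i^k-q_i^k)+\sum_{j=1}^{t+u}(r_j^k-s_j^k)=0,
\]
with $p_i,q_i$ primes in $(N,2N]$ and $r_j,s_j$ primes in $(N^{\lambda_j},2N^{\lambda_j}]$. Relaxing the primality restriction on the variables $p_i,q_i$ to allow arbitrary integers in $(N,2N]$ only enlarges this set of solutions, giving the pointwise comparison
\[
\int_0^1 |g_k(\alpha;N)|^{2w}\prod_{j=1}^{t+u}\left|g_k(\alpha;N^{\lambda_j})\right|^2{\rm d}\alpha\le \int_0^1 |f_k(\alpha;N)|^{2w}\prod_{j=1}^{t+u}\left|g_k(\alpha;N^{\lambda_j})\right|^2{\rm d}\alpha.
\]

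Next, I would apply Lemma~\ref{lem2.3.1} with $v=2w$ to the right-hand side, obtaining the bound
\[
N^{2\Lambda+2w-k+\eps}\bigl(1+N^{k-\Lambda-2w\sigma_k}\bigr).
\]
Finally, using the definition $\eta=\max\{0,k-\Lambda-2w\sigma_k\}$, one has $1+N^{k-\Lambda-2w\sigma_k}\ll N^{\eta}$: if $k-\Lambda-2w\sigma_k\le 0$ the constant $1$ dominates and $\eta=0$, while otherwise the second term is $\ge 1$ and equals $N^{\eta}$ by definition. Substituting this into the display above delivers exactly the bound claimed in Lemma~\ref{lem2.3.2}.

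The argument is essentially bookkeeping, with no substantive obstacle beyond justifying the elementary comparison of Diophantine counts; the real work has already been absorbed into Lemma~\ref{lem2.3.1}.
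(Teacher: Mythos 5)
Your proposal is correct and is essentially identical to the paper's own argument: the paper likewise bounds the integral by comparing underlying Diophantine equations to replace $|g_k(\alpha;N)|^{2w}$ with $|f_k(\alpha;N)|^{2w}$, then applies Lemma~\ref{lem2.3.1} with $v=2w$ and absorbs the factor $1+N^{k-\Lambda-2w\sigma_k}$ into $N^{\eta}$. No issues.
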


\begin{proof} By considering the underlying Diophantine equation, it follows via 
orthogonality that the mean value in question is bounded above by
\[
\int_0^1 |f_k(\alpha; N)|^{2w}\prod_{j=1}^{t+u}\left| 
g_k (\alpha;N^{\lambda_j})\right|^2 \, {\rm d}\alpha .
\]
Then we deduce from Lemma \ref{lem2.3.1} 
that
\[
\int_0^1 |g_k(\alpha; N)|^{2w}\prod_{j=1}^{t+u}\left| 
g_k (\alpha;N^{\lambda_j})\right|^2 \, {\rm d}\alpha \ll 
N^{2\Lambda+2w-k+\eps}\left( 1+N^{k-\Lambda-2w\sigma_k}\right),
\]
and the proof of the lemma is complete.
\end{proof}

\section{An upper bound for $H(k)$}
An upper bound for $H(k)$ follows by combining the mean value estimate supplied by 
Lemma \ref{lem2.3.2} with the Weyl-type estimate stemming from Lemma 
\ref{lem2.1.2}. In certain circumstances an extra variable can be saved by employing the 
device of Zhao \cite[equation (3.10)]{Zhao14}. In this section we prepare a general lemma 
that captures both the results stemming from the basic strategy, and those reflecting the 
refinement stemming from the argument of Zhao.\par

\begin{lemma}\label{lemw.1}
Let $k$, $t$ and $u$ be positive integers with $k\ge 3$ and 
$t\ge \left\lfloor\frac 12(k+3)\right\rfloor$. Define the exponent $\Lambda$ by means of 
\eqref{lam.4}, and put
\[
v=\lfloor (k-\Lambda)/(2\sigma_k)\rfloor\quad \text{and}\quad \eta^*=
k-\Lambda -2v\sigma_k.
\]
Finally, define
\[
h=\begin{cases}1,&\text{when $0\le \eta^*<\tfrac{1}{2}\sigma_k$,}\\
2,&\text{when $\tfrac{1}{2}\sigma_k\le \eta^*<\sigma_k$,}\\
3,&\text{when $\sigma_k\le \eta^*<2\sigma_k$.}\end{cases}
\]
Suppose in addition that $2(t+u+v)+h\ge 3k+1$ and, when $h\in \{1,2\}$, that either 
$v\ge 3$ or $\eta^*<h\sigma_k/3$. Then
\[
H(k)\le 2(t+u+v)+h.
\]
\end{lemma}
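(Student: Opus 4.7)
My plan is to apply the Hardy--Littlewood circle method to the representation count
\[
\rho(n)=\int_0^1 F(\alpha)e(-\alpha n)\,d\alpha,\qquad F(\alpha)=g_k(\alpha;N)^{h+2v}\prod_{j=1}^{t+u}g_k(\alpha;N^{\lambda_j})^2,
\]
which counts representations of $n$ as a sum of $s=2(t+u+v)+h$ prime $k$-th powers, with $h+2v$ primes of size $\sim N=(n/s)^{1/k}$ and two primes of size $\sim N^{\lambda_j}$ for each $j$. I would dissect $[0,1)$ into the major arcs $\mathfrak M=\mathfrak M(P,N^kP^{-1})$ with $P=N^{k\sigma_k}$ and the complementary minor arcs $\mathfrak m$. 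On the major arcs, a standard analysis modelled on \cite{Kumc06} yields $\int_{\mathfrak M}F(\alpha)e(-\alpha n)\,d\alpha\sim \mathfrak S(n)J(n)N^{h+2v+2\Lambda-k}$, with singular series $\mathfrak S(n)\gg 1$ under the congruence $n\equiv s\pmod{K(k)}$ and singular integral $J(n)\gg 1$; the hypothesis $s\ge 3k+1$ supplies far more than enough variables for the requisite convergence and positivity.

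The heart of the argument will be the minor arc bound. By Lemma~\ref{lem2.1.2} one has $\sup_{\mathfrak m}|g_k(\alpha;N)|\ll N^{1-\sigma_k/3+\eps}$, while Lemma~\ref{lem2.3.2} with $w=v$ yields
\[
\int_0^1 |g_k(\alpha;N)|^{2v}\prod_{j=1}^{t+u} |g_k(\alpha;N^{\lambda_j})|^2\,d\alpha\ll N^{2\Lambda+2v-k+\eta^*+\eps}.
\]
Extracting $h$ Weyl-bounded factors from $F$ on $\mathfrak m$ and combining the two estimates leads to
\[
\int_{\mathfrak m}|F(\alpha)|\,d\alpha\ll N^{h+2v+2\Lambda-k+\eta^*-h\sigma_k/3+\eps},
\]
which beats the main term exactly when $\eta^*<h\sigma_k/3$; this handles the basic subcase of the hypothesis.

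In the residual subcases ($h\in\{1,2\}$ with $\eta^*\ge h\sigma_k/3$ and $v\ge 3$, together with the $h=3$ case), I would invoke Zhao's device, as in \cite[equation (3.10)]{Zhao14}. The idea is to pull a small number of $g_k(\alpha;N)$ factors out of the mean-value pool, estimate them on $\mathfrak m$ by a hybrid of Cauchy--Schwarz and a sharper Weyl-type minor arc bound (exploiting the exponent $1-\sigma_k$ for $f_k$ afforded by Lemma~\ref{lem2.1.1} in place of the $1-\sigma_k/3$ bound for $g_k$), and reinvest the remaining factors through a reduced mean value via Lemma~\ref{lem2.3.2}. The net trade sacrifices a piece of the mean-value saving in exchange for a substantially sharper Weyl-type contribution, which is enough to close the deficit $\eta^*-h\sigma_k/3\in[0,2\sigma_k)$; the role of the slack $v\ge 3$ (or of the three Weyl-bounded variables already in play when $h=3$) is precisely to make this redistribution budgetarily feasible.

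The main obstacle will be the quantitative verification of this Zhao-style refinement in each edge case, and in particular checking that the enhanced Weyl-type saving on $\mathfrak m$ really does exceed the cost of the sacrificed mean-value variables throughout the parameter range admitted by the hypotheses. Once the minor arc integral is shown to be $o(N^{h+2v+2\Lambda-k})$, the major arc main term dominates, so $\rho(n)>0$ for all sufficiently large $n$ with $n\equiv s\pmod{K(k)}$, establishing $H(k)\le s=2(t+u+v)+h$.
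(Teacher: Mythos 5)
Your overall architecture is the paper's: the circle method with the $2v$ top-range prime variables absorbed into the mean value of Lemma~\ref{lem2.3.2}, the pointwise bound of Lemma~\ref{lem2.1.2} on the minor arcs when $\eta^*<h\sigma_k/3$, and Zhao's bilinear device --- Cauchy--Schwarz producing $f_k(\alpha-\beta;N)^h$, to which the full Weyl saving $N^{-\sigma_k}$ of Lemma~\ref{lem2.1.1} applies off a thin set of arcs, with $v\ge 3$ consumed in controlling the near-diagonal contribution --- for the residual cases with $h\in\{1,2\}$. For those two values of $h$ the numerology indeed closes: the off-diagonal piece of $\Upsilon$ is $\ll N^{h-h\sigma_k+\eps}\bigl(N^{2\Lambda+2v-k+\eta^*+\eps}\bigr)^2$, so after taking the square root and multiplying by $N^{h/2}$ the minor arcs contribute $\ll N^{2\Lambda+2v+h-k+\eta^*-h\sigma_k/2+\eps}$, which beats the main term $\asymp N^{2\Lambda+2v+h-k}(\log N)^{-s}$ precisely because the definition of $h$ forces $\eta^*<h\sigma_k/2$ when $h\in\{1,2\}$.

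There is, however, a genuine gap in your treatment of $h=3$, which you route through Zhao's device. By the computation just given, that device requires $\eta^*<h\sigma_k/2=\tfrac32\sigma_k$, whereas for $h=3$ one has $\eta^*\in[\sigma_k,2\sigma_k)$; so your argument fails outright on the range $\tfrac32\sigma_k\le\eta^*<2\sigma_k$, and your ``basic subcase'' cannot rescue it either, since that needs $\eta^*<h\sigma_k/3=\sigma_k$. The missing observation is that the definition of $v$ gives $k-\Lambda<(2v+2)\sigma_k$, so Lemma~\ref{lem2.3.2} applied with $w=v+1$ --- that is, absorbing \emph{two} of the three surplus top-range variables into the mean value rather than none --- has $\eta=0$ and yields $\int_0^1|g_k(\alpha;N)|^{2v+2}\prod_{j\le t+u}|g_k(\alpha;N^{\lambda_j})|^2\,{\rm d}\alpha\ll N^{2\Lambda+2v+2-k+\eps}$ with no $\eta^*$ loss at all; a single application of the sup bound $N^{1-\sigma_k/3+\eps}$ to the one remaining variable then wins by a factor $N^{-\sigma_k/4}$. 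This elementary rebalancing, not Zhao's device, is what closes $h=3$ (and is why the lemma imposes no side condition in that case): your formulation only ever uses $w=v$ in the mean value. A secondary, lesser point: you take $P=N^{k\sigma_k}$ for the major arcs and cite \cite{Kumc06}, but obtaining the asymptotic formula with a positive singular series on arcs with $q$ ranging up to a power of $N$ genuinely requires the enlarged-major-arc machinery of Liu \cite{JLiu12} (or Liu--Zhan \cite{LiZh14}), which is what the paper invokes with $P=N^{1/3}$.
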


\begin{proof} Let $s=2(t+u+v)+h$, and note that we are permitted to assume that 
$s\ge 3k+1$. Consider a large integer $n$ satisfying the congruence condition 
$n\equiv s\pmod{K(k)}$. Set $\lambda_j=1$ for $j>u+t$, and write 
$N={\textstyle \frac 12}n^{1/k}$. Denote by $R_{k,s}(n)$ the number of representations 
of $n$ in the form
\[
n=p_1^k+p_2^k+\dots +p_s^k,
\]
subject to $N^{\lambda_{j+1}}<p_{2j+\omega}\le 2N^{\lambda_{j+1}}$ for 
$0\le j\le \tfrac{1}{2}(s-\omega)$ and $\omega\in\{1,2\}$. Finally, write
\[
\mathcal G(\alpha)=\prod_{j=1}^{u+t+v}g_k(\alpha; N^{\lambda_j}).
\]
Then 
\begin{equation}\label{4.2w}
R_{k,s}(n)=\int_0^1 g_k(\alpha;N)^h\mathcal G(\alpha)^2e(-\alpha n)\, {\rm d}\alpha. 
\end{equation}

We now dissect the unit interval into sets of major and minor arcs. Let
\[
P=N^{1/3},\quad Q=N^kP^{-1},\quad L=\log N,\quad 
X=N^{2\Lambda+2v+h-k}L^{-s},
\]
in which $\Lambda$ is defined via (\ref{lam.4}). We choose the set of major arcs to be 
$\mathfrak M = \mathfrak M(P, Q)$ and write $\mathfrak m=[0,1)\setminus \mathfrak M$. 
The major arc contribution to the integral in \eqref{4.2w} can be approximated using the 
methods in Chapters 6 and 9 of a forthcoming monograph by Liu and Zhan \cite{LiZh14}. 
Alternatively, we may refer to the main theorem in Liu \cite{JLiu12}, which establishes that 
for any fixed $A>0$, one has the asymptotic formula
\begin{equation}\label{3.3}
\int_{\mathfrak M}g_k(\alpha;N)^h\mathcal G(\alpha)^2e(-\alpha n)\, {\rm d}\alpha 
=\mathfrak S_{k,s}(n)J_{k,s}(n)+O\left( XL^{-A}\right), 
\end{equation}
where
\[
\mathfrak S_{k,s}(n)=\sum_{q=1}^\infty \sum^q_{\substack{a=1\\ (a,q)=1}}
\phi(q)^{-s}\biggl(\sum^q_{\substack{r=1\\ (r,q)=1}}e(ar^k/q)\biggr)^se(-an/q)
\]
and
\[
J_{k,s}(n)=\int_{-\infty}^\infty V(\beta;N)^h\biggl( \prod_{i=1}^{u+t+v}
V(\beta;N^{\lambda_j})\biggr)^2e(-\beta n)\, {\rm d}\beta ,
\]
in which
\[
V(\beta;Z)=\int_Z^{2Z}\frac{e(\beta \gamma^k)}{\log \gamma}\, {\rm d}\gamma .
\]
Here, the expression $\mathfrak S_{k,s}(n)$ is the singular series associated with sums of 
$s$ $k$th powers of primes and $J_{k,s}(n)$ is the singular integral associated with the 
representations counted by $R_{k,s}(n)$ (see Liu and Zhan 
\cite[equations (4.50) and (9.16)]{LiZh14}). We remark that while the main result in Liu 
\cite{JLiu12} states \eqref{3.3} for a different set of major arcs, corresponding to a larger 
choice of $P$ than that given above, Liu's argument applies also to $\mathfrak M$. 
Moreover, with our choice of $N$, standard estimates for the singular series and the 
singular integral (see Liu and Zhan \cite[\S\S4.6 and 6.2]{LiZh14}) confirm that for all 
sufficiently large integers $n$ with $n\equiv s\pmod{K(k)}$,
\begin{equation}\label{3.4}
X\ll \mathfrak S_{k,s}(n)J_{k,s}(n)\ll X.
\end{equation}
We note in this context that the conditions $s\ge 3k+1$ and $n\equiv s\pmod{K(k)}$ 
ensure that the singular series is positive (see \cite[Theorem 12]{Hua65}).\par

We turn next to the contribution of the minor arcs, first considering the situation with 
$\sigma_k\le \eta^*<2\sigma_k$, in which case $h=3$, together with that in which 
$h\in \{1,2\}$ and $\eta^*<h\sigma_k/3$. By Lemma \ref{lem2.1.2}, one has
\begin{equation}\label{4.3}
\sup_{\alpha \in \mathfrak m}|g_k(\alpha;N)|\ll N^{1-\sigma_k/3+\eps}.
\end{equation}
Write
\begin{equation}\label{4.3w}
\Theta=\int_0^1|g_k(\alpha;N)\mathcal G(\alpha)|^2\,{\rm d}\alpha .
\end{equation}
Then, since the definition of $v$ ensures that $k-\Lambda<(2v+2)\sigma_k$, we 
find from Lemma \ref{lem2.3.2} that
\begin{equation}\label{4.3a}
\Theta \ll N^{2\Lambda +2v+2-k+\eps}.
\end{equation}
Thus, when $h=3$, we obtain the bound
\begin{align}\label{4.5}
\int_{\mathfrak m}|g_k(\alpha; N)^h\mathcal G(\alpha)^2|\, d\alpha \ll 
\Bigl(\sup_{\alpha \in \mathfrak m}|g_k(\alpha;N)|\Bigr)\Theta \ll XN^{-\sigma_k/4}.
\end{align}
On the other hand, when $h\in \{1,2\}$ and $\eta^*<h\sigma_k/3$, we find instead that
\[
\int_{\mathfrak m}|g_k(\alpha; N)^h\mathcal G(\alpha)^2|\, d\alpha \ll 
\Bigl(\sup_{\alpha \in \mathfrak m}|g_k(\alpha;N)|\Bigr)^h\int_0^1
|\mathcal G(\alpha)|^2\,{\rm d}\alpha \ll XN^{\eta^*-h\sigma_k/3+\eps}.
\]
By combining these estimates with (\ref{4.2w})-(\ref{3.4}), we conclude in these cases 
that
\begin{equation}\label{asy}
R_{k,s}(n)=\mathfrak S_{k,s}(n)J_{k,s}(n)+O(XL^{-A})\gg X.
\end{equation}
Thus $H(k)\le s$, completing the proof of the lemma when 
$\sigma_k\le \eta^*<2\sigma_k$.\par

The final case to consider is that in which $h\in \{1,2\}$ and $0\le \eta^*<\sigma_k$. 
Here, we employ the method of Zhao \cite[equation (3.10)]{Zhao14}. For simplicity of 
exposition, we provide a detailed account of the situation with $h=2$, although it will be 
clear how to adjust the argument to handle the case $h=1$. We begin with the observation 
that
\[
\int_{\mathfrak m}g_k(\alpha;N)^h\mathcal G(\alpha)^2e(-n\alpha)\,{\rm d}\alpha =
\sum_{N<p_1,p_2\le 2N}\int_{\mathfrak m}\mathcal G(\alpha)^2
e((p_1^k+p_2^k-n)\alpha)\,{\rm d}\alpha ,
\]
whence, by Cauchy's inequality,
\begin{equation}\label{ups.1}
\biggl| \int_{\mathfrak m}g_k(\alpha;N)^h\mathcal G(\alpha)^2e(-n\alpha)\,{\rm d}
\alpha \biggr|\le N^{h/2}\Upsilon^{1/2},
\end{equation}
where
\begin{align*}
\Upsilon&=\sum_{N<x_1,x_2\le 2N}\biggl| \int_{\mathfrak m}\mathcal G(\alpha)^2
e((x_1^k+x_2^k-n)\alpha)\,{\rm d}\alpha \biggr|^2\\
&=\int_{\mathfrak m}\int_{\mathfrak m}\mathcal G(\alpha)^2\mathcal G(-\beta)^2
f_k(\alpha-\beta;N)^he(-n(\alpha-\beta))\,{\rm d}\alpha \, {\rm d}\beta .
\end{align*}

\par Put $\mathfrak N(q,a)=\mathfrak M(q,a;N^{k\sigma_k},N^{k-k\sigma_k})$ and 
$\mathfrak N=\mathfrak M(N^{k\sigma_k},N^{k-k\sigma_k})$. Also, write 
$\mathfrak n=[0,1)\setminus \mathfrak N$. Next, denote by $\mathfrak B$ the set of 
ordered pairs $(\alpha,\beta)\in \mathfrak m^2$ for which 
$\alpha-\beta\in \mathfrak N\pmod{1}$, and put $\mathfrak b=\mathfrak m^2\setminus 
\mathfrak B$. Let $\Psi:[0,1)\rightarrow [0,\infty)$ denote the function defined by taking 
\[
\Psi(\alpha)=w_k(q)N(1+N^k|\alpha -a/q|)^{-1},
\]
when $\alpha \in \mathfrak N(q,a)\subseteq \mathfrak N$, and otherwise by taking 
$\Psi(\alpha)=0$. Then it follows from an application of the triangle inequality that
\[
\Upsilon\le \iint_{\mathfrak b}|f_k(\alpha-\beta;N)^h\mathcal G(\alpha)^2
\mathcal G(\beta)^2|\,{\rm d}\alpha \, {\rm d}\beta +\iint_{\mathfrak B}
|f_k(\alpha-\beta;N)^h\mathcal G(\alpha)^2\mathcal G(\beta)^2|\,{\rm d}\alpha 
\, {\rm d}\beta.
\]
Thus we deduce from Lemma \ref{lem2.1.1} that
\begin{equation}\label{4.z2}
\Upsilon\ll \Upsilon_1+N^{h-1}\Upsilon_2,
\end{equation}
where
\[
\Upsilon_1=N^{h-h\sigma_k+\eps}\int_0^1\int_0^1|\mathcal G(\alpha)
\mathcal G(\beta)|^2\,{\rm d}\alpha \, {\rm d}\beta .
\]
and
\[
\Upsilon_2=\iint_{\mathfrak B}\Psi(\alpha-\beta)|\mathcal G(\alpha)\mathcal G(\beta)|^2
\,{\rm d}\alpha \, {\rm d}\beta .
\]

It follows from Lemma \ref{lem2.3.2} that
\begin{equation}\label{4.z}
\Upsilon_1\ll N^{h-h\sigma_k+\eps}\left( N^{2\Lambda+2v-k+\eta^*+\eps}\right)^2
=X^2N^{-h+2\eta^*-h\sigma_k+3\eps}.
\end{equation}
In order to bound $\Upsilon_2$, we begin by using the estimate
\[
|g_k(\alpha ;N)g_k(\beta;N)|^2\ll |g_k(\alpha ;N)|^4+|g_k(\beta;N)|^4,
\]
in combination with trivial estimates and symmetry, to obtain
\[
\Upsilon_2\ll N^{\Lambda-1-\lambda_2}\int_{\mathfrak m}
\int_{\mathfrak m}\Psi(\alpha-\beta)| 
g_k(\alpha;N)^{v-1}g_k(\alpha;N^{\lambda_2})g_k(\beta;N)^2
\mathcal G(\alpha)\mathcal G(\beta)^2|\,{\rm d}\alpha \, {\rm d}\beta .
\]
Write
\[
\Phi=\sup_{\beta\in [0,1)}\int_0^1\Psi(\alpha-\beta)^2|g_k(\alpha;N^{\lambda_2})|^2
\,{\rm d}\alpha .
\]
Then since \cite[Lemma 2.2]{Zhao14} supplies the bound 
$\Phi \ll N^{2+2\lambda_2-k+\eps}$, it follows from Cauchy's inequality in combination 
with (\ref{4.3})-(\ref{4.3a}) that
\begin{align*}
\int_{\mathfrak m}\Psi(\alpha-\beta)| 
g_k(\alpha;N)^{v-1}g_k(\alpha;N^{\lambda_2})\mathcal G(\alpha)|\,{\rm d}\alpha 
&\ll \Bigl( \sup_{\alpha \in \mathfrak m}|g_k(\alpha ;N)|\Bigr)^{v-2}\Theta^{1/2} 
\Phi^{1/2}\\
&\ll (N^{1-\sigma_k/3})^{v-2}N^{\Lambda+\lambda_2+v+2-k+\eps}.
\end{align*}
We therefore conclude by means of (\ref{4.3a}) that
\begin{align*} 
\Upsilon_2&\ll N^{2\Lambda +2v-k-1-\sigma_k/4}\int_0^1|g_k(\beta;N)
\mathcal G(\beta)|^2\, {\rm d}\beta \\
&\ll N^{4\Lambda +4v+1-2k-\sigma_k/5}.
\end{align*}
On substituting this estimate together with (\ref{4.z}) into (\ref{4.z2}), and noting that, 
by hypothesis, we have $h\sigma_k>2\eta^*$, we deduce that for some positive number 
$\nu$, one has
\[
\Upsilon \ll X^2N^{-h-2\nu}.
\]
Inserting this bound into (\ref{ups.1}), we arrive at the estimate
\[
\int_{\mathfrak m}g_k(\alpha;N)^h\mathcal G(\alpha)^2e(-n\alpha)\,{\rm d}
\alpha \ll XN^{-\nu},
\]
an estimate that may be employed as a viable substitute for (\ref{4.5}) in the argument 
leading to (\ref{asy}). Thus the conclusion of the lemma follows also in these final cases, 
and so the proof of the lemma is complete.
\end{proof}

\section{Proof of Theorems \ref{th1} and \ref{th2}}
Theorems \ref{th1} and \ref{th2} are direct consequences of Lemma \ref{lemw.1}. 
Recall (\ref{lam.1})-(\ref{lam.4}), and write $\sigma=\sigma_{k-1}$ and 
$\phi=\theta+\sigma/k$. Then one has
\[
\sum_{i=1}^{u+1}\lambda_i=\frac{1-\phi^{u+1}}{1-\phi}=
\frac{k}{1-\sigma}(1-\phi^{u+1})
\]
and
\begin{align*}
\sum_{j=2}^t\lambda_{u+j}&=\left(\frac{k^2-\theta^{t-3}}
{k^2+k-k\theta^{t-3}}\right)\lambda_{u+1}+\left(\frac {k^2-k-1}
{k^2+k-k\theta^{t-3}}\right)\left( \frac{1-\theta^{t-2}}{1-\theta}\right)\lambda_{u+1}
\\
&=\left( \frac{k^3-k-(k^3-2k^2+2)\theta^{t-3}}{k^2+k-k\theta^{t-3}}\right) \phi^u.
\end{align*}
Thus
\[
\Lambda=\frac{k}{1-\sigma}+\left( \frac{(k^3-k-(k^3-2k^2+2)\theta^{t-3})
(1-\sigma)-(k-1+\sigma)(k^2+k-k\theta^{t-3})}{(k^2+k-k\theta^{t-3})(1-\sigma)}\right) 
\phi^u,
\]
and hence
\[
k-\Lambda=-\frac{k\sigma}{1-\sigma}+\left( \frac{k^2(k+1)\sigma +\theta^{t-3}
((k^3-3k^2+k+2)-\sigma(k^3-2k^2+k+2))}{(k^2+k-k\theta^{t-3})(1-\sigma)}\right) 
\phi^u.
\]
This formula provides the key input into our application of Lemma \ref{lemw.1}.

\begin{proof}[The proof of Theorem \ref{th2}] Let $k$ be an integer with $8\le k\le 20$, 
and define $t =t_k$, $u=u_k$, $v=v_k$ and $h=h_k$ according to Table \ref{tab2}. Then 
a straightforward computer program confirms that the hypotheses of Lemma \ref{lemw.1} 
hold, and hence that $H(k)\le 2(t+u+v)+h$. Indeed, with $h_k^*$ defined by 
Table \ref{tab3}, one finds for these values of $k$ that $2\eta^*/\sigma_k<h_k^*$. All 
entries in this table have been rounded up in the final decimal place recorded. This 
completes the proof of Theorem \ref{th2}.

\begin{table}[ht]
\begin{center}
\begin{tabular}{cccccccccccccc}
\toprule
$k$ & $8$ & $9$ & $10$ & $11$ & $12$ & $13$ & $14$ & $15$ & $16$ & $17$ & $18$ & 
$19$ & $20$ \\
\midrule
$t_k$ & $9$ & $18$ & $13$ & $14$ & $9$ & $20$ & $13$ & $14$ & $26$ & $28$ & $33$ 
& $23$ & $35$ \\
$u_k$ & $18$ & $15$ & $27$ & $30$ & $44$ & $41$ & $56$ & $64$ & $56$ & $62$ & 
$68$ & $81$ & $78$ \\
$v_k$ & $3$ & $4$ & $3$ & $7$ & $5$ & $4$ & $4$ & $3$ & $6$ & $6$ & $4$ & $9$ & 
$8$ \\
$h_k$ & $1$ & $1$ & $3$ & $1$ & $1$ & $1$ & $1$ & $1$ & $2$ & $2$ & $1$ & $1$ & 
$2$ \\
\bottomrule
\end{tabular}\\[12pt]
\end{center}
\caption{The values of $t_k$, $u_k$, $v_k$ and $h_k$ for $8 \le k \le 20$}\label{tab2}
\end{table}

\begin{table}[ht]
\begin{center}
\begin{tabular}{cccccccccccccc}
\toprule
$k$ & $8$ & $9$ & $10$ & $11$ & $12$ & $13$ & $14$ \\
\midrule
$h_k^*$ & $0.56062$ & $0.09534$ & $2.05276$ & $0.01726$ & $0.00008$ & 
$0.99878$ & $0.01987$\\
\bottomrule \toprule
$k$ & $15$ & $16$ & $17$ & $18$ & $19$ & $20$\\
\midrule
$h_k^*$ & $0.00055$ & $1.90169$ & $1.99481$ & $0.00497$ & $0.00294$ & 
$1.10563$\\
\bottomrule
\end{tabular}\\[12pt]
\end{center}
\caption{The values of $h_k^*$ for $8 \le k \le 20$}\label{tab3}
\end{table}

\end{proof}

It is evident that there is substantial non-monotonicity in the values of $t_k$, $u_k$ and 
$v_k$ recorded in Table \ref{tab2}. It seems to the authors that since $\theta$ and $\phi$ 
have values that are rather close together, then there is relatively little sensitivity in the 
optimisation to the specific values of $t_k$ and $u_k$, but rather it is the sum $t_k+u_k$ 
that is important. We note also that the values of $h_k^*$ are extremely small for a 
number of the exponents $k$, so that relatively modest improvements to the values 
$\Sigma (k)$ recorded in Table \ref{tab2x} will lead to improved values of $H(k)$.

\begin{proof}[The proof of Theorem \ref{th1}]
We may now suppose that $k$ is large. We put $t=t_k$ and $u=u_k$, where
\[
t_k=\left\lceil \tfrac{1}{2}k\log k\right\rceil \quad \text{and}\quad u_k=
\left\lceil 2k\log k\right\rceil -t-4.
\]
It is convenient for later use to put $\gam=\lceil 2k\log k\rceil -2k\log k$. Also, we write
\[
\tau=\frac{1}{2k^2-6k+4}\quad \text{and}\quad \sigma=\frac{1}{2k^2-10k+12},
\]
so that $\sigma_k=\tau$ and $\sigma_{k-1}=\sigma$. Our formula for $k-\Lambda$ may 
now be written in the shape
\[
k-\Lambda=-\frac{k\sigma}{1-\sigma}+\left( \frac{k^2(k+1)(k-1)^3\sigma +\theta^t
k^3(k^3-3k^2+O(k))}{(k-1)^3(k^2+k-k\theta^{t-3})(1-\sigma)}\right) 
\phi^u.
\]

\par Since
\[
\log \theta=\log \left( 1-\frac{1}{k}\right)=-\frac{1}{k}-\frac{1}{2k^2}+O\left( 
\frac{1}{k^3}\right),
\]
it follows that
\[
t\log \theta =-\frac{t}{k}-\frac{\log k}{4k}+O\left( \frac{\log k}{k^2}\right),
\]
and hence
\[
\theta^t=e^{-t/k}\left( 1-\frac{\log k}{4k}+O(k^{-3/2})\right)\asymp k^{-1/2}.
\]
Similarly, since
\[
\log \phi=\log \left( 1-\frac{1-\sigma}{k}\right)=-\frac{1}{k}-\frac{1}{2k^2}+
O\left(\frac{1}{k^3}\right),
\]
we have
\[
\phi^u=e^{-u/k}\left( 1-\frac{3\log k}{4k}+O(k^{-3/2})\right) \ll k^{-3/2}.
\]
In particular, we find that
\[
k-\Lambda=-\frac{k\sigma}{1-\sigma}+\left(k-1+O(k^{-1/2})\right)\theta^t\phi^u
+O(k^{-5/2}),
\]
and that
\begin{align*}
\theta^t\phi^u&=e^{-(t+u)/k}\left( 1-\frac{\log k}{k}+O(k^{-3/2})\right)\\
&=e^{(4-\gamma)/k}\left( \frac{1}{k^2}-\frac{\log k}{k^3}+O(k^{-7/2})\right).
\end{align*}

\par On noting that
\[
\frac{\sigma}{\tau}=\frac{2k^2-6k+4}{2k^2-10k+12}=1+\frac{2}{k}+O\left( 
\frac{1}{k^2}\right),
\]
it follows that
\begin{align*}
\frac{k-\Lambda}{2\tau}&=-\tfrac{1}{2}(k+2)+
\frac{e^{(4-\gam)/k}(k^2-3k+2)}{k^3}(k-1)(k-\log k)+O(k^{-1/2})\\
&=-\tfrac{1}{2}(k+2)+(k-\log k-4)\left( 1+\frac{4-\gam}{k}\right) +O(k^{-1/2})\\
&=\tfrac{1}{2}k-\log k-1-\gamma+O(k^{-1/2}).
\end{align*}
Let $v=\left\lfloor(k-\Lambda)/(2\tau)\right\rfloor$, put $\eta^*=k-\Lambda-2v\tau$, and 
define $h$ as in the statement of Lemma \ref{lemw.1}. In particular, one has 
$0\le \eta^*<2\tau$, and no matter what the value of $\eta^*$ may be, one confirms that
\[
2v+h=\frac{k-\Lambda-\eta^*}{\tau}+h\le \frac{k-\Lambda}{\tau}+2
\le k-2\log k-2\gamma+O(k^{-1/2}).
\]
Therefore, since
\[
2(t+u+v)+h\le 2(2k\log k+\gamma-4)+k-2\log k-2\gamma+O(k^{-1/2}),
\]
we conclude from Lemma \ref{lemw.1} that
\[
H(k)\le (4k-2)\log k+k-8+O(k^{-1/2}).
\]
In view of our assumption that $k$ is sufficiently large, it follows that 
\[
H(k)\le (4k-2)\log k+k-7,
\]
and the proof of Theorem \ref{th1} is complete.
\end{proof}

\providecommand{\bysame}{\leavevmode\hbox to3em{\hrulefill}\thinspace}

\end{document}